\newcommand{\C}{\mathscr{C}}
\renewcommand{\L}{\mathcal{L}}
\newcommand{\N}{\mathbb{N}}
\newcommand{\R}{\mathbb{R}}
\newcommand{\loc}{{\rm loc}}
\newcommand{\PV}{\mbox{\normalfont P.V.}}
\newcommand{\Haus}{\mathcal{H}}
\newcommand{\red}{{\partial^*}}
\DeclareMathOperator{\Per}{Per}
\newcommand{\cu}{\mathscr H}
\def\XXint#1#2#3{{\setbox0=\hbox{$#1{#2#3}{\int}$ }
\vcenter{\hbox{$#2#3$ }}\kern-.6\wd0}}
\newlength{\dhatheight}
\numberwithin{equation}{section}
\theoremstyle{plain}
\newtheorem{definition}{Definition}[section]
\newtheorem{theorem}[definition]{Theorem}
\newtheorem{proposition}[definition]{Proposition}
\newtheorem{lemma}[definition]{Lemma}
\newtheorem{corollary}[definition]{Corollary}
\theoremstyle{definition}
\newtheorem{remark}[definition]{Remark}
\renewcommand{\le}{\leqslant}
\renewcommand{\ge}{\geqslant}
\title[Bernstein-Moser-type results for nonlocal minimal graphs]{Bernstein-Moser-type results\\for nonlocal minimal graphs}
\author{Matteo Cozzi}
\author{Alberto Farina}
\author{Luca Lombardini}
\address{\vspace{-15pt}
\newline
\textit{Matteo Cozzi}
\newline
University of Bath, Department of Mathematical Sciences, Bath BA2 7AY, UK
\newline
\textit{E-mail address}: \textit{\tt m.cozzi@bath.ac.uk}
}
\address{\vspace{-13pt}
\newline
\textit{Alberto Farina}
\newline
Universit\'e de Picardie ``Jules Verne'', Laboratoire Ami\'enois de Math\'ematique Fondamentale et Appliqu\'ee, CNRS UMR 7352, 33 Rue St Leu, 80039 Amiens, France
\newline
\textit{E-mail address}: \textit{\tt alberto.farina@u-picardie.fr}
}
\address{\vspace{-13pt}
\newline
\textit{Luca Lombardini}
\newline
Universit\`a degli Studi di Milano, Dipartimento di Matematica ``Federigo Enriques'', Via Saldini 50, 20133 Milano, Italy
\newline
Universit\'e de Picardie ``Jules Verne'', Laboratoire Ami\'enois de Math\'ematique Fondamentale et Appliqu\'ee, CNRS UMR 7352, 33 Rue St Leu, 80039 Amiens, France
\newline
University of Western Australia, Department of Mathematics and Statistics, 35 Stirling Highway, WA 6009 Crawley, Australia
\newline
\textit{E-mail address}: \textit{\tt luca.lombardini@unimi.it}
}
\keywords{Nonlocal minimal graphs, flatness results, Bernstein-Moser theorem}
\subjclass[2010]{49Q05, 53A10, 47G20, 28A75}
\thanks{The first author acknowledges support from a Royal Society Newton International Fellowship, from the MINECO grants MTM2014-52402-C3-1-P and MTM2017-84214-C2-1-P, and from the Mar\'ia de Maeztu Programme for Units of Excellence in R\&D with project code MDM-2014-0445. Part of this work has been carried out while the first and second authors were visiting the Universit\`a degli Studi di Milano, which they thank for the warm hospitality.}
\begin{document}

\begin{abstract}
We prove a flatness result for entire nonlocal minimal graphs having some partial derivatives bounded from either above or below. This result generalizes fractional versions of classical theorems due to Bernstein and Moser.
%Furthermore, it provides a partial extension of a sharp rigidity result recently obtained by the second author for standard minimal graphs.
Our arguments rely on a general splitting result for blow-downs of nonlocal minimal graphs.\\
Employing similar ideas, we establish that entire nonlocal minimal graphs bounded on one side by a cone are affine.\\
Moreover, we show that entire graphs having constant nonlocal mean curvature are minimal, thus extending a celebrated result of Chern on classical CMC graphs.
\end{abstract}

\maketitle

\section{Introduction and main results}

\noindent
Let~$n \ge 1$ be an integer and~$\alpha \in (0, 1)$. Given an open set~$\Omega \subseteq \R^{n + 1}$ and a measurable set~$E \subseteq \R^{n + 1}$, we define the~\emph{$\alpha$-perimeter} of~$E$ in~$\Omega$ by
$$
\Per_\alpha(E, \Omega) := \int_{\Omega \cap E} \int_{\R^{n + 1} \setminus E} \frac{dx dy}{|x - y|^{n + 1 + \alpha}} + \int_{E \setminus \Omega} \int_{\Omega \setminus E} \frac{dx dy}{|x - y|^{n + 1 + \alpha}}.
$$
A measurable set~$E \subseteq \R^{n + 1}$ is called~\emph{$\alpha$-minimal} in~$\Omega$ if it satisfies~$\Per_\alpha(E, \Omega) < +\infty$ and~$\Per_\alpha(E, \Omega) \le \Per_\alpha(F, \Omega)$ for every~$F \subseteq \R^{n + 1}$ such that~$F \setminus \Omega = E \setminus \Omega$. Sets that minimize~$\Per_\alpha$ in all bounded open subsets of~$\R^{n + 1}$ will be simply called~$\alpha$-minimal and their boundaries~\emph{$\alpha$-minimal surfaces}.

Fractional (or nonlocal) perimeters and their minimizers have been first introduced by Caffarelli, Roquejoffre \& Savin~\cite{CRS10} in~2010, motivated by applications to phase transition problems in the presence of long range interactions. There, the authors established several results about~$\alpha$-minimal surfaces, concerning in particular their existence and regularity. They also showed that every minimizer~$E$ of~$\Per_\alpha$ satisfies the Euler-Lagrange equation
$$
H_\alpha[E](x) = 0 \quad \mbox{for } x \in \partial E
$$
in a suitable viscosity sense. The quantity~$H_\alpha[E](x)$ is often referred to as the~\emph{$\alpha$-mean curvature} of~$E$ at~$x \in \partial E$ and is formally defined by
\begin{equation} \label{Halphadef}
H_\alpha[E](x) := \PV \int_{\R^{n+1}} \frac{\chi_{\R^{n+1}\setminus E}(y) - \chi_{E}(y)}{|x - y|^{n + 1 + \alpha}} \, dy.
\end{equation}
In the subsequent years, many authors have directed their attention towards~$\alpha$-minimal surfaces, obtaining a variety of results mostly regarding their regularity and qualitative behavior. We encourage the reader to consult the surveys contained in~\cite{V13},~\cite[Chapter~6]{BV16},~\cite{DV18}, and~\cite[Section~7]{CF17} for more information.

In this brief note we are mostly interested in~$\alpha$-minimal sets~$E \subseteq \R^{n + 1}$ that are subgraphs of a measurable function~$u: \R^n \to \R$, i.e., that satisfy
\begin{equation} \label{Esubgraph}
E = \left\{ x = (x', x_{n + 1}) \in \R^n \times \R : x_{n + 1} < u(x') \right\}.
\end{equation}
We will call the boundaries of such extremal sets~\emph{$\alpha$-minimal graphs}.

Note that, when~$E$ is the subgraph of a function~$u$, we can write its~$\alpha$-mean curvature as an integrodifferential operator acting on~$u$. More precisely, letting~$u: \R^n \to \R$ be a function of, say, class~$C^{1, 1}$ in a neighborhood of a point~$x' \in \R^n$ and~$E$ be given by~\eqref{Esubgraph}, we have that
\begin{equation} \label{HE=Hu}
H_\alpha[E](x',u(x')) = \cu_\alpha u(x'),
\end{equation}
with
\begin{equation} \label{Hcorsdef}
\cu_\alpha u(x') := 2 \, \PV \int_{\R^n} G \left( \frac{u(x')-u(y')}{|x' - y'|} \right) \frac{dy'}{|x' - y'|^{n+\alpha}} 
\end{equation}
and
\begin{equation} \label{Gdef}
G(t):=\int_0^t\frac{d\tau}{(1+\tau^2)^\frac{n+1+\alpha}{2}} \quad \mbox{for } t\in\R.
\end{equation}
Both here and in~\eqref{Halphadef} the symbol~$\PV$ means that the integrals must be understood in the Cauchy principal value sense. See, e.g.,~\cite[Section~2]{CV13} or~\cite[Appendix~B]{BLV16} for a proof of identity~\eqref{HE=Hu}.
%It can be easily seen that~$\cu_\alpha u(x')$ is well defined, thanks to the regularity of~$u$ around~$x'$.

Taking advantage of the convexity of the energy functional associated to~$\cu_\alpha$ and of a suitable rearrangement inequality, it will be shown in~\cite{CL18} that a set~$E$ given by~\eqref{Esubgraph} for some function~$u: \R^n \to \R$ is~$\alpha$-minimal if and only if~$u$ is a solution of
\begin{equation} \label{Hu=0}
\cu_\alpha u = 0 \quad \mbox{in } \R^n.
\end{equation}
There are several notions of solutions of~\eqref{Hu=0}, such as smooth solutions, viscosity solutions, and weak solutions. However, all such definitions are equivalent under mild assumptions on~$u$---for more details, see the forthcoming~\cite{CL18} or Chapter~4 of the PhD thesis~\cite{Lthesis} of the third author (and in particular~\cite[Corollary~4.1.12]{Lthesis}). In what follows, a solution of~\eqref{Hu=0} will always be a function~$u \in C^\infty(\R^n)$ that satisfies identity~\eqref{Hu=0} pointwise. We stress that no growth assumptions at infinity are made on~$u$.

The main contribution of this note is the following result.

\begin{theorem} \label{underPakmainthm}
Let~$n \ge \ell \ge 1$ be integers,~$\alpha \in (0, 1)$, and suppose that
\begin{equation*} \tag{$P_{\alpha, \ell}$} \label{Paellprop}
\mbox{there exist no singular~$\alpha$-minimal cones in~$\R^\ell$.}
\end{equation*}
Let~$u$ be a solution of~\eqref{Hu=0} having~$n - \ell$ partial derivatives bounded on one side.

Then,~$u$ is an affine function.
\end{theorem}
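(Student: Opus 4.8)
The plan is to reduce the statement to a dimension-reduction argument via blow-down analysis. Let me think carefully about the structure.

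We have $u: \mathbb{R}^n \to \mathbb{R}$ solving $\mathscr{H}_\alpha u = 0$, and WLOG (after relabeling coordinates and possibly changing signs) $\partial_{x_1} u, \dots, \partial_{x_{n-\ell}} u$ are all bounded from below, say by $-M$. The subgraph $E$ of $u$ is $\alpha$-minimal.

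**Step 1: Blow-down.** Consider the rescaled sets $E_r := \frac{1}{r} E$ for $r \to \infty$. By standard compactness for $\alpha$-minimizers (from \cite{CRS10}), along a subsequence $r_j \to \infty$, $E_{r_j} \to E_\infty$ in $L^1_{loc}$, where $E_\infty$ is an $\alpha$-minimal cone (it is a cone by the monotonicity formula / density estimates). Moreover $E_\infty$ is again a subgraph: since each $E_r$ is a subgraph of $u_r(x') = \frac{1}{r}u(rx')$, and the blow-down limit of a subgraph is a subgraph (using e.g. a barrier / the uniform measure-theoretic structure), $E_\infty$ is the subgraph of some function $u_\infty: \mathbb{R}^n \to \mathbb{R}$ that is a solution of $\mathscr{H}_\alpha u_\infty = 0$.

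**Step 2: Splitting.** This is where "our arguments rely on a general splitting result for blow-downs of nonlocal minimal graphs" (from the abstract) enters. The key observation: the bound $\partial_{x_i} u \ge -M$ for $i = 1, \dots, n-\ell$ means that for the rescaled functions, $\partial_{x_i} u_r = (\partial_{x_i} u)(rx') \ge -M$ as well — this bound is *scale-invariant*. Passing to the limit, $\partial_{x_i} u_\infty \ge -M$. But $u_\infty$ is $1$-homogeneous (its subgraph is a cone), so $\partial_{x_i} u_\infty$ is $0$-homogeneous, i.e. constant along rays. A $0$-homogeneous function bounded below and... hmm, that alone doesn't force constancy.

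Let me reconsider. The right mechanism: $u_\infty$ being $1$-homogeneous and solving the minimal surface equation — combined with $\partial_{x_i} u_\infty$ bounded below — should force $u_\infty$ to be *independent* of $x_1, \dots, x_{n-\ell}$, hence $E_\infty = E_\infty' \times \mathbb{R}^{n-\ell}$ where $E_\infty'$ is an $\alpha$-minimal cone in $\mathbb{R}^{\ell+1}$ (the graph of a $1$-homogeneous function on $\mathbb{R}^\ell$). The reason: if $v$ is $1$-homogeneous on $\mathbb{R}^n$ and $\partial_{x_1} v$ is bounded below, then along any line in the $x_1$-direction, $v$ is a $1$-homogeneous-restricted function with derivative bounded below; homogeneity forces $\partial_{x_1}v$ to be bounded above too on compact pieces away from origin by a reflection/homogeneity trick, and iterating, $v$ is affine in $x_1$, and then $1$-homogeneity kills the affine part's constant, and the linear-in-$x_1$ coefficient must be $0$-homogeneous... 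Actually the cleanest route: by a result on cones being translation-invariant when a derivative has a sign (De Giorgi-type / dimension reduction), one shows $E_\infty$ splits off $\mathbb{R}^{n-\ell}$.

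**Step 3: Apply $(P_{\alpha,\ell})$.** Once $E_\infty = C \times \mathbb{R}^{n-\ell}$ with $C$ an $\alpha$-minimal cone in $\mathbb{R}^{\ell+1}$ which is the subgraph of a $1$-homogeneous function on $\mathbb{R}^\ell$, we invoke $(P_{\alpha,\ell})$: there are no singular $\alpha$-minimal cones in $\mathbb{R}^\ell$. Hmm — I need to be careful about what "singular cone in $\mathbb{R}^\ell$" means versus a cone in $\mathbb{R}^{\ell+1}$. The convention is presumably that $(P_{\alpha,\ell})$ means every $\alpha$-minimal cone in $\mathbb{R}^\ell$ is a half-space (no singularities in ambient dimension $\ell$), equivalently the boundary of an $\alpha$-minimal cone in $\mathbb{R}^\ell$ is smooth hence flat. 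Via the standard fact that a cylinder $C \times \mathbb{R}^{n-\ell}$ is minimal iff $C$ is, and that a minimal cone in $\mathbb{R}^{\ell+1}$ which is a graph must be... wait, the dimension bookkeeping: $C \subseteq \mathbb{R}^{\ell+1}$ is a cone whose blow-down considerations reduce to cones in $\mathbb{R}^\ell$. Actually the subgraph structure plus conehood: $C = \text{subgraph}(w)$, $w: \mathbb{R}^\ell \to \mathbb{R}$ one-homogeneous. One more blow-down in the remaining directions, or a direct argument: a $1$-homogeneous $\alpha$-minimal graph over $\mathbb{R}^\ell$ — its singular set lies in $\mathbb{R}^\ell$, and $(P_{\alpha,\ell})$ forbids singular cones there, forcing $w$ to be linear, hence $C$ is a half-space.

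**Step 4: Flatness at infinity implies affine.** So every blow-down of $E$ is a half-space. By the improvement-of-flatness / the fact that a nonlocal minimal graph whose blow-down is flat must itself be affine (this uses the regularity theory: flatness at all scales going to infinity propagates, or one directly shows $u$ has linear growth with vanishing oscillation of the gradient), we conclude $u$ is affine. More concretely: since the blow-down is a hyperplane, $u$ has at most linear growth; then by \cite{CRS10}-type regularity and a Liouville argument for the linearized operator, or by the result that $\alpha$-minimal graphs with bounded gradient are affine applied after establishing $\nabla u$ is bounded, $u$ is affine.

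**Main obstacle.** The hard part will be Step 2 — rigorously proving the splitting: that the one-sided bound on $n-\ell$ partial derivatives, which is preserved under blow-down, forces the homogeneous limit cone to be a cylinder over an $\mathbb{R}^\ell$-dimensional cone. The delicate point is that a one-sided bound on a derivative does *not* immediately give translation invariance; one must exploit the interplay between homogeneity of the cone and the sign condition (perhaps: $\partial_i u_\infty$ is $0$-homogeneous, bounded below, and is itself — up to the linearized equation — a nonnegative subsolution or a function satisfying a Harnack-type principle on the cone, which on a cone forces it to be constant, and then a constant that can be normalized; iterating over the $n - \ell$ directions). Ensuring each step of this — compactness of blow-downs preserving the graph property and the derivative bound, the homogeneity of the limit, and the rigidity of $0$-homogeneous one-signed solutions of the linearized equation on a cone — is the technical heart. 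A secondary subtlety is the precise meaning and bookkeeping of $(P_{\alpha,\ell})$ (ambient dimension $\ell$ vs. $\ell+1$) and correctly passing from "no singular cones in $\mathbb{R}^\ell$" to "the reduced minimal graph over $\mathbb{R}^\ell$ is affine."
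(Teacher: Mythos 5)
Your skeleton (blow-down, splitting, use of~\eqref{Paellprop}, conclude flatness) is the same as the paper's, but there is a genuine gap at the start of your Step~2 that propagates through the rest: you assert that the blow-down limit $E_\infty$ is again the subgraph of a real-valued function $u_\infty:\R^n\to\R$ solving the equation. This is false in general, and it fails precisely in the case you must handle: if $u$ is not affine, the blow-down of its subgraph is a \emph{vertical cylinder} $\C'\times\R$, invariant under translations in $e_{n+1}$, hence not the subgraph of any finite function (blow-downs of graphs are only ``generalized'' subgraphs with values in $[-\infty,+\infty]$). Consequently your discussion of the $1$-homogeneity of $u_\infty$, the $0$-homogeneity of $\partial_i u_\infty$, and a Harnack/Liouville rigidity for the linearized equation has no object to act on, and the splitting --- which you yourself flag as the technical heart --- is never actually established. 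The paper sidesteps this by encoding both hypotheses as \emph{set inclusions} that survive the $L^1_\loc$ limit: the subgraph property gives $E-te_{n+1}\subseteq E$, and $\partial_{x_i}u\ge-\kappa$ gives $E+te_i-\kappa t\,e_{n+1}\subseteq E$; both are scale invariant, so the blow-down cone $\C$ satisfies $\C+v\subseteq\C$ for $v=-e_{n+1}$ and $v=e_i-\kappa e_{n+1}$. The key lemma (Proposition~\ref{GMMprop}, a nonlocal Gonzalez--Massari--Miranda result, proved via the Jacobi-type equation of~\cite{CC17} and a Harnack-type inequality on $\red\C$ --- this is the correct home for the Harnack mechanism you guessed at) then says such a cone is either a half-space or a cylinder in direction $v$; since $\C$ is not a half-space when $u$ is not affine (Lemma~\ref{blowdownlem}), applying it first with $-e_{n+1}$ and then with each $e_i-\kappa e_{n+1}$ yields $\C=\R^{n-\ell}\times P\times\R$ with $P\subseteq\R^{\ell}$ a singular $\alpha$-minimal cone.

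This also resolves the dimension bookkeeping you worry about in Step~3, which as written does not close: because you never obtain the vertical splitting (you insist $E_\infty$ is a graph), you are left with a singular $\alpha$-minimal cone in $\R^{\ell+1}$, to which hypothesis~\eqref{Paellprop} (about cones in ambient dimension $\ell$) does not directly apply; the remark that ``its singular set lies in $\R^\ell$'' is not an invocation of~\eqref{Paellprop} and would itself require exactly the missing cylindrical splitting in the vertical direction. With the paper's version, $P$ is genuinely an $\alpha$-minimal cone in $\R^\ell$ and is singular, so~\eqref{Paellprop} gives an immediate contradiction; in particular the argument runs by contraposition through Theorem~\ref{singblowdownthm}, and your Step~4 (improvement of flatness at infinity to recover that $u$ is affine from flat blow-downs) is not needed --- the only fact of that type used is the equivalence ``$E_\infty$ half-space iff $E$ half-space'' already contained in Lemma~\ref{blowdownlem}.
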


%Condition~\eqref{Paellprop} is crucial for the validity of Theorem~\ref{underPakmainthm}.
We point out that throughout the paper a \emph{cone} is any subset~$\C$ of the Euclidean space for which~$\lambda x \in \C$ for every~$x \in \C$ and~$\lambda > 0$. A set~$E$ will be said to be \emph{trivial} if either~$E$ or its complement has measure zero. In addition, a \emph{singular} cone is a cone whose boundary is not smooth at the origin or, equivalently, any nontrivial cone that is not a half-space.

Characterizing the values of~$\alpha$ and~$\ell$ for which~\eqref{Paellprop} is satisfied represents a challenging open problem, whose solution would lead to fundamental advances in the understanding of the regularity properties enjoyed by nonlocal minimal surfaces. Currently, property~\eqref{Paellprop} is know to hold in the following cases:
\begin{itemize}[leftmargin=*]
\item when~$\ell = 1$ or~$\ell = 2$, for every~$\alpha \in (0, 1)$;
\item when~$3\le \ell \le 7$ and~$\alpha \in (1 - \varepsilon_0, 1)$ for some small~$\varepsilon_0 \in (0,1]$ depending only on~$\ell$. 
\end{itemize}
Case~$\ell = 1$ holds by definition, while~$\ell = 2$ is the content of~\cite[Theorem~1]{SV13}. On the other hand, case~$3 \le \ell \le 7$ has been established in~\cite[Theorem~2]{CV13}---see also~\cite{CCS17} for a different approach yielding an explicit value for~$\varepsilon_0$ when~$\ell = 3$.

As a consequence of Theorem~\ref{underPakmainthm} and the last remarks, we immediately obtain the following result.

\begin{corollary} \label{mainthm}
Let~$n \ge \ell \ge 1$ be integers and~$\alpha \in (0, 1)$. Assume that either
\begin{itemize}[leftmargin=*]
\item $\ell \in \{ 1, 2 \}$, or
\item $3\le \ell \le 7$ and~$\alpha \in (1 - \varepsilon_0, 1)$, with~$\varepsilon_0=\varepsilon_0(\ell) > 0$ as in~\cite[Theorem~2]{CV13}.
\end{itemize}
Let~$u$ be a solution of~\eqref{Hu=0} having~$n-\ell$ partial derivatives bounded on one side.

Then,~$u$ is an affine function.
\end{corollary}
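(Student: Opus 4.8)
Corollary~\ref{mainthm} is immediate from Theorem~\ref{underPakmainthm}, since property~\eqref{Paellprop} is known to hold in each of the two listed situations---for~$\ell\in\{1,2\}$ by the discussion above and~\cite{SV13}, and for~$3\le\ell\le7$ with~$\alpha$ close to~$1$ by~\cite{CV13}. So the real content is Theorem~\ref{underPakmainthm}, and I now sketch how I would prove it. Write~$m:=n-\ell$. The plan begins with a normalization: relabeling the coordinates of~$\R^n$ and replacing~$x_j$ by~$-x_j$ for suitable indices---operations that preserve~\eqref{Hu=0}---one may assume that~$\partial_j u\ge-K$ on~$\R^n$ for some~$K\ge0$ and all~$j=\ell+1,\dots,n$. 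Integrating, the subgraph~$E$ of~$u$, which is~$\alpha$-minimal by the equivalence recalled above, then satisfies
\[
E-t e_{n+1}\subseteq E\qquad\text{and}\qquad E+t\,(e_j-Ke_{n+1})\subseteq E\qquad\text{for all }t>0,\ j=\ell+1,\dots,n.
\]
Next I would pass to a blow-down: the rescalings~$E_R:=R^{-1}E$ are~$\alpha$-minimal and inherit these monotonicities, so by the compactness and monotonicity-formula arguments of~\cite{CRS10} there is a sequence~$R_k\to+\infty$ along which~$E_{R_k}\to\C$ in~$L^1_\loc(\R^{n+1})$, with~$\C$ an~$\alpha$-minimal \emph{cone}. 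Passing the monotonicities to the limit, $\C$ is the subgraph of a positively~$1$-homogeneous function~$u_\infty\colon\R^n\to[-\infty,+\infty]$ satisfying~$\partial_j u_\infty\ge-K$ on~$\{u_\infty\text{ finite}\}$ for~$j=\ell+1,\dots,n$.

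The key step---the ``general splitting result for blow-downs'' announced in the abstract---is to upgrade these one-sided monotonicities to genuine translation invariances of~$\C$. For each~$j$ put~$v_j:=e_j-Ke_{n+1}$; the nested family~$\{\C+tv_j\}_{t\in\R}$ consists of~$\alpha$-minimal sets and, $\C$ being a cone, is self-similar under dilations (since~$\lambda^{-1}(\C+tv_j)=\C+\lambda^{-1}tv_j$). Combining the strong maximum principle for~$\alpha$-minimal surfaces with this self-similarity, I expect a dichotomy: either~$\partial\C$ and~$\partial\C+tv_j$ touch for one, and hence for every, $t>0$---in which case~$\C+tv_j=\C$ for all~$t\in\R$---or~$\C$ is itself a subgraph in the direction~$v_j$, a possibility one disposes of separately. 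In this way~$\C$ becomes invariant under the~$m$-dimensional subspace~$W:=\operatorname{span}(v_{\ell+1},\dots,v_n)$. After a rotation of~$\R^{n+1}$ carrying~$W$ onto a coordinate subspace---which preserves~$\alpha$-minimality---and an application of the dimension-reduction theorem for~$\alpha$-minimal cylinders, one gets~$\C=\widetilde\C\times\R^m$ with~$\widetilde\C$ an~$\alpha$-minimal cone in~$\R^{\ell+1}$ that is still the subgraph of a (possibly extended-valued) positively~$1$-homogeneous function over~$\R^\ell$. I would also dispose here of the vertical portions of~$\C$: where~$u_\infty=\pm\infty$ on a nontrivial cone, that cone is invariant under~$e_{\ell+1},\dots,e_n$ and projects, via dimension reduction, to a nontrivial~$\alpha$-minimal cone in~$\R^\ell$, which by~\eqref{Paellprop} must be a half-space; a short argument then shows that in this case~$\C$ is already a vertical half-space of~$\R^{n+1}$. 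Hence one may assume that~$u_\infty$, and so the function defining~$\widetilde\C$, is finite.

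It then remains to prove that~$\widetilde\C$ is a half-space and to descend back to~$u$. Given~$\xi\in\partial\widetilde\C\setminus\{0\}$, the blow-up of~$\widetilde\C$ at~$\xi$ is an~$\alpha$-minimal cone invariant along the ray~$\R\xi$---because~$\widetilde\C$ is a cone with vertex~$0\ne\xi$---so it equals~$\mathcal{T}_0\times\R\xi$ with~$\mathcal{T}_0$ a nontrivial~$\alpha$-minimal cone in the~$\ell$-dimensional subspace~$\xi^{\perp}$. By~\eqref{Paellprop}, $\mathcal{T}_0$ is a half-space, so the blow-up at~$\xi$ is a half-space and~$\xi$ is a regular point of~$\partial\widetilde\C$. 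Thus~$\partial\widetilde\C$ is smooth away from the origin; since~$\widetilde\C$ is a subgraph over~$\R^\ell$, its defining function~$w\colon\R^\ell\to\R$ is a smooth positively~$1$-homogeneous solution of~$\cu_\alpha w=0$ on~$\R^\ell$ (using that~$\alpha$-minimal graphs are~$C^\infty$ in every dimension, together with the equivalence of the notions of solution), and a~$C^1$ positively~$1$-homogeneous function is linear; therefore~$\widetilde\C$---and hence~$\C$---is a half-space. Finally, by the monotonicity formula of~\cite{CRS10}, the normalized~$\alpha$-perimeter density of~$E$ at every point of~$\R^{n+1}$, and at infinity, is trapped between the density of a half-space (the minimal value) and the density of the blow-down~$\C$, which is again that of a half-space; hence this density is constant in both the center and the scale, so~$E$ is a cone with respect to each of its points, and therefore a half-space. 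Equivalently, $u$ is affine.

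The step I expect to be genuinely hard is the splitting: converting one-sided monotonicity of a blow-down cone into authentic translation invariance, and, hand in hand with it, ruling out---or neutralizing---the vertical portions of the blow-down. The remaining ingredients (compactness and the conical structure of blow-downs, dimension reduction for~$\alpha$-minimal cylinders, the~$\varepsilon$-regularity theorem, and the~$C^\infty$ regularity of nonlocal minimal graphs) are by now standard consequences of~\cite{CRS10} and of the literature surveyed in~\cite{V13,BV16,DV18,CF17}.
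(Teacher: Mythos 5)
Your treatment of the corollary itself is exactly the paper's: it follows immediately from Theorem~\ref{underPakmainthm}, since~\eqref{Paellprop} holds trivially for~$\ell=1$, by~\cite{SV13} for~$\ell=2$, and by~\cite{CV13} for~$3\le\ell\le7$ with~$\alpha$ close to~$1$. Had you stopped after the first sentence there would be nothing to add; the issues below concern the sketch of Theorem~\ref{underPakmainthm} that you appended, which departs from the paper's argument and contains a genuine gap.

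First, the dichotomy you ``expect'' from a strong maximum principle is Proposition~\ref{GMMprop} of the paper: an $\alpha$-minimal cone with~$\C+v\subseteq\C$ is either a half-space or a cylinder in direction~$v$. The paper does not prove it by a touching argument (delicate here, since the cones may have singular sets and the sets need only be nested, not touching at a regular point), but via the stability-type equation and the Harnack-type inequality of~\cite{CC17} applied to~$w=-\nu_\C\cdot v$ on the reduced boundary, together with Miranda's criterion~\cite{M64} and the regularity of~\cite{FV17}; also, the correct second horn is ``$\C$ is a half-space,'' not merely ``$\C$ is a subgraph in direction~$v_j$,'' and your plan to ``dispose of it separately'' is left unexplained. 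Second, and this is the real gap: after splitting~$\C=\widetilde\C\times\R^m$ with~$\widetilde\C\subseteq\R^{\ell+1}$ a cone that is a subgraph over~$\R^\ell$, you argue that~$\partial\widetilde\C$ is smooth away from the origin (fine, by blow-up at~$\xi\ne0$ and~\eqref{Paellprop}) and then conclude flatness because ``a~$C^1$ positively~$1$-homogeneous function is linear.'' Smoothness away from the vertex gives no differentiability at the vertex ($|x|$ is~$1$-homogeneous and smooth outside the origin), and excluding non-flat graph-cones in~$\R^{\ell+1}$ knowing only~\eqref{Paellprop} in~$\R^\ell$ is precisely the nontrivial Bernstein-type content; it cannot be dismissed by homogeneity. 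The paper sidesteps this entirely by splitting off the \emph{vertical} direction as well: since~$\widetilde\C-te_{\ell+1}\subseteq\widetilde\C$, Proposition~\ref{GMMprop} forces~$\widetilde\C$ to be either a half-space or~$P\times\R$ with~$P\subseteq\R^\ell$ an $\alpha$-minimal cone, which is singular whenever~$\widetilde\C$ is not a half-space, contradicting~\eqref{Paellprop}. This is exactly how Theorem~\ref{singblowdownthm} is stated and used, and it renders your blow-up-at-$\xi$ and homogeneity discussion unnecessary. (Your closing step via a ``monotonicity formula'' to pass from a flat blow-down back to~$E$ is also not what the paper does---it uses the $\varepsilon$-regularity and density estimates as in Lemma~\ref{blowdownlem}---but that is a difference of route rather than a gap.)
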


%Corollary~\ref{mainthm} generalizes several known rigidity results for~$\alpha$-minimal graphs. Taking~$\ell = n$ we recover the fractional Bernstein theorems of~\cite{FV17}.
We observe that
Theorem~\ref{underPakmainthm} gives a new flatness result for~$\alpha$-minimal graphs, under the assumption that~\eqref{Paellprop} holds true. It can be seen as a generalization of the fractional~De~Giorgi-type lemma contained in~\cite[Theorem~1.2]{FV17}, which is recovered here taking~$\ell = n$. In this case, we indeed provide an alternative proof of the result of~\cite{FV17}.

On the other hand, the choice~$\ell = 2$ gives an improvement of~\cite[Theorem~4]{FarV17}, when specialized to~$\alpha$-minimal graphs. In light of these observations, Theorem~\ref{underPakmainthm} and Corollary~\ref{mainthm} can be seen as a bridge between Bernstein-type theorems (flatness results in low dimensions) and Moser-type theorems (flatness results under global gradient bounds).

For classical minimal graphs---formally corresponding to the case~$\alpha = 1$ here (see, e.g.,~\cite{ADM11,CV13})---the counterpart of Corollary~\ref{mainthm} has been recently obtained by the second author in~\cite{F17}. In that case, the result is sharp and holds with~$\ell = \min \{ n, 7 \}$.
%This is essentially a consequence of the fact that property~\eqref{Paellprop} for~$\alpha = 1$ is valid if and only if~$1 \le \ell \le 7$.
See also~\cite{F15} by the same author for a previous result established for~$\ell = 1$ and through a different argument.

Using the same ideas that lead to Theorem~\ref{underPakmainthm}, we can prove the following rigidity result for entire~$\alpha$-minimal graphs that lie above a cone.

%We also mention~\cite[Theorem~1.5]{CC17}, which is a flatness result for~$\alpha$-minimal graphs that grow at most linearly at infinity. By exploiting the same ideas used in the proof of Theorem~\ref{underPakmainthm}, we can prove the following stronger result, which requires a linear control on the growth at infinity only from below---thus extending [Theorem BOH] to the fractional and nonlocal framework.

\begin{theorem}\label{growthTeo}
Let~$n\ge1$ be an integer and~$\alpha\in(0,1)$. Let~$u$ be a solution of~\eqref{Hu=0} and assume that there exists a constant~$C > 0$ for which
\begin{equation} \label{ugecone}
u(x')\ge - C (1+|x'|)\quad\mbox{for every }x'\in\R^n.
\end{equation}
Then,~$u$ is an affine function.
\end{theorem}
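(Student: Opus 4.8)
The plan is to exploit the same blow-down/splitting machinery that powers Theorem~\ref{underPakmainthm}, combined with the fact that a nonlocal minimal cone lying above a (linear) cone must be flat. Concretely, let $E$ be the subgraph of $u$ as in~\eqref{Esubgraph}. Since $u$ solves~\eqref{Hu=0}, the set $E$ is $\alpha$-minimal in all of $\R^{n+1}$. Consider the blow-down family $E_R := R^{-1} E$ for $R \to +\infty$. By the compactness and lower-semicontinuity properties of the fractional perimeter (standard since~\cite{CRS10}), along a subsequence $E_R \to E_\infty$ locally in $L^1$, where $E_\infty$ is an $\alpha$-minimal cone in $\R^{n+1}$. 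The hypothesis~\eqref{ugecone} rescales to $R^{-1} u(R x') \ge -C(R^{-1} + |x'|) \to -C|x'|$, so in the limit the cone $E_\infty$ contains the subgraph of the Lipschitz function $x' \mapsto -C|x'|$; equivalently, $E_\infty \supseteq \{x_{n+1} < -C|x'|\}$, and in particular $E_\infty$ contains a full lower half-space direction and cannot be trivial.

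The key geometric step is then: \emph{an $\alpha$-minimal cone in $\R^{n+1}$ that lies above the graph of a linear cone must be a vertical half-space, i.e., $E_\infty = \{x \cdot e < 0\}$ for some horizontal unit vector $e \in \R^n \times \{0\}$, or $E_\infty$ is itself a subgraph of a linear function.} Here I would invoke the splitting result for blow-downs advertised in the abstract (the "general splitting result for nonlocal minimal graphs" that underlies Theorem~\ref{underPakmainthm}): the blow-down $E_\infty$ of a minimal \emph{graph} is again a minimal graph, the subgraph of some function $u_\infty: \R^n \to [-\infty,+\infty)$, and moreover $u_\infty$ is positively $1$-homogeneous. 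Combining homogeneity with the lower bound $u_\infty(x') \ge -C|x'|$ and with the fact (from property~$(P_{\alpha,\ell})$ with $\ell=1$, which holds unconditionally, or more directly from the classification of one-dimensional situations / the dimension reduction) that $u_\infty$ must in fact be finite and affine — being a homogeneous degree-one nonlocal minimal graph bounded below by a cone forces it to be linear. Thus $E_\infty$ is a half-space.

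Once we know the blow-down is a half-space, we want to conclude that $u$ itself is affine. For this I would argue that the half-space blow-down forces an improvement-of-flatness mechanism: if $\partial E$ has a hyperplane as blow-down, then by the interior regularity theory for $\alpha$-minimal surfaces (\cite{CRS10}, and the improvement-of-flatness / $\varepsilon$-regularity results), $\partial E$ is flat at large scales, and one can then run a Liouville-type argument on the derivatives $\partial_i u$, which are bounded (once flatness at infinity is known) and satisfy the linearized equation; by the De~Giorgi-type lemma \cite[Theorem~1.2]{FV17} (the $\ell=n$ case of Theorem~\ref{underPakmainthm}) applied after this flattening, or by directly quoting Theorem~\ref{underPakmainthm} with $\ell$ chosen so that all $n$ partial derivatives become one-side bounded, we obtain that $u$ is affine. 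An alternative and cleaner route: the half-space blow-down gives a uniform bound $|\nabla u| \le C'$ on all of $\R^n$ (since the blow-down being vertical-free means the graph has a global gradient bound), and then Theorem~\ref{underPakmainthm} with $\ell = n$ (which requires only $(P_{\alpha,n})$... no — rather with $\ell=1$, needing no unproven hypothesis, once we know \emph{all} partials are bounded) yields affinity.

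The main obstacle I anticipate is the passage from "the blow-down is a half-space" to "$u$ has a global gradient bound" — i.e., upgrading a one-scale (blow-down) flatness statement to a genuine Moser-type gradient estimate valid at every scale. This requires either a monotonicity-type or a doubling argument ensuring the blow-down limit is attained uniformly, or a careful use of the improvement-of-flatness theorem iterated from large scales down to unit scale; both are delicate because the nonlocal setting couples all scales through the tail term, so the estimate must control $\Tail$ contributions uniformly. The homogeneity of the blow-down (the splitting result) is what makes this tractable, but verifying that $u_\infty$ cannot take the value $-\infty$ on a nontrivial cone — i.e., ruling out a degenerate "vertical" piece in the blow-down — is the technical heart and will likely occupy the bulk of the argument.
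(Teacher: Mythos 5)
There is a genuine gap, and it sits exactly where you yourself flag ``the technical heart.'' Your argument hinges on the claim that the blow-down $E_\infty$ of the subgraph is again a subgraph of a finite, positively $1$-homogeneous function $u_\infty$, which would then be linear because of the bound $u_\infty(x')\ge -C|x'|$. No result in the paper (and none you prove) gives this: the splitting theorem (Theorem~\ref{singblowdownthm}) does not say that blow-downs of minimal graphs are graphs. The correct dichotomy, which follows from the vertical monotonicity $E-te_{n+1}\subseteq E$ passed to the limit together with Proposition~\ref{GMMprop} and Lemma~\ref{blowdownlem}, is: either $E$ (hence every blow-down) is a half-space, or every blow-down is a vertical cylinder $\mathscr{C}'\times\R$ over a \emph{nontrivial singular} $\alpha$-minimal cone $\mathscr{C}'\subseteq\R^n$ --- in your language, $u_\infty\equiv\pm\infty$ on complementary cones, precisely the ``degenerate vertical piece'' you defer. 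Excluding that case is not a lengthy technical matter but the one place where hypothesis \eqref{ugecone} is used, and it is immediate: after a vertical translation, $E$ contains $\mathscr{D}=\{x_{n+1}<-C|x'|\}$, so any blow-down contains $\mathscr{D}$ as well; since $\mathscr{D}$ projects onto all of $\R^n$, the inclusion $\mathscr{D}\subseteq\mathscr{C}'\times\R$ forces $\mathscr{C}'=\R^n$, contradicting nontriviality. Arguing by contradiction (assume $u$ not affine, so the blow-down is such a cylinder) finishes the proof in a few lines; this is the paper's argument, and your proposal is missing exactly this observation.

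The second half of your plan is also shakier than it needs to be. Once a blow-down is known to be a half-space, you do not need a global gradient bound, a Moser-type estimate, or a re-application of Theorem~\ref{underPakmainthm}: Lemma~\ref{blowdownlem} already records (via the $\varepsilon$-regularity theory of \cite{CRS10} and \cite[Lemma~3.1]{FV17}) that the blow-down is a half-space if and only if $E$ itself is a half-space, which for a subgraph means $u$ is affine. The ``main obstacle'' you anticipate --- upgrading blow-down flatness to a uniform gradient bound at all scales --- is therefore a detour you should not take, and in any case you leave it unresolved. Finally, note that your invocation of $(P_{\alpha,\ell})$ is unnecessary here: Theorem~\ref{growthTeo} holds in every dimension with no such hypothesis, precisely because the contradiction above kills all singular cylindrical blow-downs regardless of whether singular cones exist.
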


Of course, the same conclusion can be drawn if~\eqref{ugecone} is replaced by the specular
$$
u(x') \le C (1 + |x'|) \quad \mbox{for every } x' \in \R^n.
$$

For classical minimal graphs, the corresponding version of Theorem~\ref{growthTeo} follows at once from the gradient estimate of Bombieri, De Giorgi \& Miranda~\cite{BDM69} and Moser's version of Bernstein's theorem~\cite{M61}. See for instance~\cite[Theorem~17.6]{G84} for a clean statement and the details of its proof.

In the nonlocal scenario, a gradient bound for~$\alpha$-minimal graphs has been recently established in~\cite{CC17}. However, this result is partly weaker than the one of~\cite{BDM69}, since it provides a bound for the gradient of a solution of~\eqref{Hu=0} in terms of its oscillation, and not just of its supremum (or infimum) as in~\cite{BDM69}. Consequently, in~\cite{CC17} a rigidity result analogous to Theorem~\ref{growthTeo} is deduced, but with~\eqref{ugecone} replaced by the stronger, two-sided assumption:~$|u(x')| \le C(1 + |x'|)$ for every~$x' \in \R^n$. Theorem~\ref{growthTeo} thus improves~\cite[Theorem~1.6]{CC17} directly. Moreover, our proof is different, as it relies on geometric considerations rather than uniform regularity estimates.

The proof of Theorem~\ref{underPakmainthm} is based on the extension to the fractional framework of a strategy devised by the second author for classical minimal graphs and previously unpublished. As a result, the ideas contained in the following sections can be used to obtain a different, easier proof of~\cite[Theorem~1.1]{F17}---since, by Simons' theorem (see, e.g.,~\cite[Theorem~28.10]{Maggi}), no singular classical minimal cones exist in dimension lower or equal to~$7$. Similarly, the same argument that we employ for Theorem~\ref{growthTeo} can be successfully applied to classical minimal graphs, giving a different, more geometric, proof of~\cite[Theorem~17.6]{G84}.

The argument leading to Theorem~\ref{underPakmainthm} relies on a general splitting result for blow-downs of~$\alpha$-minimal graphs. Since it may have an interest on its own, we provide its statement here below.

\begin{theorem} \label{singblowdownthm}
Let~$n \ge 1$ be an integer and~$\alpha \in (0, 1)$. Let~$u$ be a solution of~\eqref{Hu=0} and~$E$ as in~\eqref{Esubgraph}. Assume that~$u$ is not affine and that, for some~$k \in \{ 1, \ldots, n - 1 \}$, the partial derivative~$\frac{\partial u}{\partial x_i}$ is bounded from below in~$\R^n$ for every~$i = 1, \ldots, k$.

Then, every blow-down limit~$\C \subseteq \R^{n + 1}$ of~$E$ is a cylinder of the form
$$
\C = \R^k \times P \times \R,
$$
for some singular~$\alpha$-minimal cone~$P \subseteq \R^{n - k}$.
\end{theorem}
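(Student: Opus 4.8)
The plan is to combine the standard blow-down machinery for nonlocal minimal surfaces with a dimensional reduction forced by the one-sided gradient bounds. First I would recall that, since $u$ solves \eqref{Hu=0}, the subgraph $E$ is $\alpha$-minimal in all of $\R^{n+1}$, hence its rescalings $E_R := R^{-1} E$ are $\alpha$-minimal in every ball. By the compactness and uniform density estimates for $\alpha$-perimeter minimizers (Caffarelli--Roquejoffre--Savin \cite{CRS10}), along a sequence $R_j \to \infty$ the sets $E_{R_j}$ converge in $L^1_{\loc}$ and locally uniformly (in the Hausdorff sense on the boundaries) to a set $\C$ which is $\alpha$-minimal in $\R^{n+1}$; moreover $\C$ is a cone, by the usual monotonicity-formula argument for the nonlocal perimeter. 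It remains to identify the structure of $\C$.

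The key point is to exploit the hypothesis that $\partial u/\partial x_i \ge -c_i$ for $i = 1,\dots,k$. For each such $i$, consider the function $u^t_i(x') := u(x' + t e_i) + c_i t$: the one-sided bound says $t \mapsto u^t_i(x')$ is nondecreasing, and each $u^t_i$ is again a solution of \eqref{Hu=0} (since $\cu_\alpha$ commutes with translations and with adding affine functions of this form). Translating to the level of subgraphs, the sets $E$ are ordered under the combined translation in the $(x_i, x_{n+1})$-plane along the direction $(e_i, -c_i)$—equivalently, $E$ is a subgraph over the hyperplane orthogonal to a slightly tilted direction, and it is monotone under the corresponding family of translations. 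Passing this monotonicity to the blow-down, $\C$ is invariant under translations in each of the directions $v_i := (e_i, 0, \dots, 0, -c_i) \in \R^{n+1}$ (the $\R^{n+1}$ here has coordinates $(x_1,\dots,x_n,x_{n+1})$), because a bounded monotone family, after rescaling, becomes genuinely translation-invariant in the limit. The span of $v_1,\dots,v_k$ is a $k$-dimensional subspace $V$ on which $\C$ is invariant; a linear change of coordinates fixing the last coordinate maps $V$ to $\R^k \times \{0\}^{n-k} \times \{0\}$, and under this map $\C$ becomes a cylinder $\R^k \times \widetilde\C$ with $\widetilde\C \subseteq \R^{n-k+1}$ an $\alpha$-minimal cone (translation-invariance in $\ell$ independent directions of a minimizer reduces the ambient dimension by $\ell$, by the standard slicing argument; see, e.g., the dimension-reduction results in \cite{CRS10}).

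Next I would argue that $\widetilde\C$ is itself a subgraph in the $x_{n+1}$ direction, i.e. $\widetilde\C = \{ x_{n+1} < v(x'') \}$ for some (possibly extended-real-valued, $1$-homogeneous) $v$ on $\R^{n-k}$. Indeed $E$ is a subgraph, so it is invariant under downward vertical translations $x_{n+1} \mapsto x_{n+1} - s$, $s > 0$; this persists in the blow-down, forcing $\C$, and hence $\widetilde\C$, to be a subgraph of a $1$-homogeneous function. A $1$-homogeneous function on $\R^{n-k}$ whose subgraph is an $\alpha$-minimal cone—if it is not identically $\pm\infty$ on an open set—must satisfy $\cu_\alpha v = 0$; but a $1$-homogeneous solution of the nonlocal minimal surface equation on $\R^{n-k}$ is linear (this is the cone case of the interior regularity/rigidity for graphical cones, and also follows because a graphical cone $\partial\widetilde\C$ is then a hyperplane through the origin). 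If $v$ is linear, tracing back through the coordinate changes shows $u$ itself is affine, contradicting our hypothesis. Hence $\widetilde\C$ cannot be a graphical cone, which means its vertical slices degenerate: writing $\widetilde\C = P \times \R$ with $P \subseteq \R^{n-k}$, the set $P$ is an $\alpha$-minimal cone in $\R^{n-k}$ (again by dimension reduction in the now-vertical invariant direction), and $P$ is singular—if $P$ were a half-space or trivial, $\widetilde\C = P \times \R$ would be a half-space or trivial in $\R^{n-k+1}$, again making $\partial \C$ a hyperplane and $u$ affine. Undoing the linear coordinate change gives $\C = \R^k \times P \times \R$ as claimed.

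The main obstacle, and the step deserving the most care, is the passage from the one-sided derivative bounds to genuine translation-invariance of the blow-down $\C$ in the tilted directions $v_i$: one must show that a monotone (rather than periodic or strictly invariant) family of translates of a minimizer yields, after blow-down, an honestly translation-invariant limit cone, and that the $k$ directions obtained are linearly independent so that the reduction of dimension is by exactly $k$. This requires a careful monotonicity argument—using that the "defect" in translation-invariance at scale $R$ is controlled and vanishes as $R \to \infty$, together with a maximum-principle/sliding-type argument at the level of the subgraphs to upgrade local convergence to invariance—and the bookkeeping of the linear change of coordinates that straightens $V$ while preserving the graph structure in the $x_{n+1}$ variable.
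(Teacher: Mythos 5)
Your overall plan mirrors the paper's: pass the one-sided derivative bounds to monotone inclusions of $E$ under tilted translations, blow down, and conclude a cylindrical structure. But the decisive step is missing. What survives the blow-down is only the \emph{one-sided} inclusion $\C + t v_i \subseteq \C$ for $t>0$ (and $\C - t e_{n+1} \subseteq \C$): your assertion that ``a bounded monotone family, after rescaling, becomes genuinely translation-invariant in the limit'' is false as a general principle. A half-space whose inner normal has positive component along $v_i$ is a cone, satisfies $\C + t v_i \subseteq \C$ strictly for all $t>0$, is its own blow-down, and is \emph{not} invariant under these translations. So the upgrade from monotone inclusion to invariance cannot be automatic; one needs the dichotomy that an $\alpha$-minimal cone with $\C + v \subseteq \C$ is either a half-space or a cylinder in direction $v$. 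This is exactly Proposition~\ref{GMMprop} in the paper (the nonlocal analogue of Gonzalez--Massari--Miranda), and its proof is genuinely nontrivial: one shows that $w := -\nu_\C\cdot v \ge 0$ on $\red\C$ satisfies the stability-type equation $\L w + c^2 w = 0$ from \cite{CC17}, applies a nonlocal Harnack-type inequality on $\red\C$ (together with the perimeter estimates of \cite{CSV18}) to conclude either $w\equiv 0$ (cylinder) or $w>0$ (locally Lipschitz graph in direction $v$, hence smooth by \cite{FV17}, hence a half-space since it is a cone). You flag this step yourself as ``the main obstacle'' and gesture at a sliding/maximum-principle argument, but no such argument is supplied, and without it the proof does not close: the half-space alternative must be ruled out \emph{before} invariance can be claimed, which the paper does at the outset via Lemma~\ref{blowdownlem} ($\C$ is a half-space iff $E$ is, and $E$ is not since $u$ is not affine).

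A secondary point: your route to the singularity of $P$ (analyzing whether the reduced cone $\widetilde\C$ is the subgraph of a $1$-homogeneous function and claiming such graphical cones are hyperplanes) introduces claims that would themselves need justification and can be avoided entirely. In the paper, non-half-space (equivalently, singularity) is inherited directly from $E$ through Lemma~\ref{blowdownlem}, and the cylindrical splittings are obtained by two clean applications of Proposition~\ref{GMMprop}: first with $v=e_{n+1}$ to get $\C=\C'\times\R$, then, using the vertical invariance to absorb the tilt $-\kappa e_{n+1}$, with $v=e_i$ for each $i=1,\dots,k$; the $\alpha$-minimality of the cross-section follows from the dimension-reduction result of \cite{CRS10}. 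Your tilted-coordinates bookkeeping is workable in principle, but the missing invariance lemma is where the real content lies.
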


The notion of blow-down limit will be made precise in Section~\ref{blowsec}.

\begin{remark}
As revealed by a simple inspection of its proof, Theorem~\ref{singblowdownthm} still holds if we require any~$k$ directional derivatives~$\partial_{\nu_1}u,\ldots,\partial_{\nu_k}u$ (not necessarily the partial derivatives) to be bounded from below, provided that the directions~$\nu_1,\ldots,\nu_k$ are linearly independent. Consequently, one can similarly modify the statements of Theorem~\ref{underPakmainthm} and Corollary~\ref{mainthm} without affecting their validity.
\end{remark}

Theorem~\ref{growthTeo} says in particular that there exist no non-flat~$\alpha$-minimal subgraphs that contain a half-space. Actually, one can prove the  
following theorem, valid not only for~$\alpha$-minimal subgraphs, but for general minimizers of the~$\alpha$-perimeter.

\begin{theorem} \label{nononflatthm}
Let~$n \ge 1$ be an integer and~$\alpha \in (0, 1)$. If~$E$ is a nontrivial~$\alpha$-minimal set in~$\R^{n + 1}$ that contains a half-space, then~$E$ is a half-space.
\end{theorem}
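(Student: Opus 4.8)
**The plan is to prove Theorem~\ref{nononflatthm} by a blow-down argument, reducing to the case of a minimal cone containing a half-space.**

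Suppose $E \subseteq \R^{n+1}$ is a nontrivial $\alpha$-minimal set containing a half-space $\Pi$. After a rigid motion we may assume $\Pi = \{x_{n+1} < 0\}$, so that $E \supseteq \{x_{n+1} < 0\}$. First I would take a blow-down of $E$: set $E_R := R^{-1} E$ and use the standard compactness theory for $\alpha$-minimizers (local uniform perimeter bounds together with the $\alpha$-minimality being preserved under dilations) to extract a subsequence $E_{R_j} \to \C$ in $L^1_{\loc}$, where $\C$ is an $\alpha$-minimal \emph{cone}. Because $E \supseteq \{x_{n+1} < 0\}$ and each $E_R$ also contains $\{x_{n+1} < 0\}$ (the half-space is scale-invariant), the limit satisfies $\C \supseteq \{x_{n+1} < 0\}$ as well. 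Conversely, one needs to check that $\C$ is nontrivial: since $E$ is nontrivial and its complement has positive measure, one can use the density estimates of~\cite{CRS10} at a boundary point of $E$ to guarantee that $|\C^c| > 0$, so $\C$ is a nontrivial $\alpha$-minimal cone containing a half-space.

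Next I would show that a nontrivial $\alpha$-minimal cone $\C$ containing a half-space must itself be that half-space. Here the key point is the strong maximum principle / strong comparison for the nonlocal mean curvature operator: both $\C$ and the half-space $\{x_{n+1} < 0\}$ are $\alpha$-minimal, one contains the other, and they are distinct only if $|\C \setminus \{x_{n+1}<0\}| > 0$. If the inclusion is strict, then since $\C$ is a cone, $\partial \C$ touches $\{x_{n+1} = 0\}$ from inside at the origin (or $\partial\C$ lies in the closed upper half-space and meets the hyperplane), and applying the strong maximum principle for $\alpha$-minimal surfaces (two ordered $\alpha$-minimal surfaces that touch must coincide) forces $\C = \{x_{n+1}<0\}$ — contradicting strictness. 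Hence $\C = \{x_{n+1} < 0\}$ is a half-space, i.e.\ $\partial\C$ is smooth, so $\C$ is a \emph{non-singular} cone.

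Finally, I would invoke the regularity theory: since $E$ admits a blow-down that is a smooth (half-space) cone, the improvement-of-flatness / dimension-reduction machinery of~\cite{CRS10} implies that $\partial E$ is smooth, and in fact $E$ itself is a half-space (an $\alpha$-minimal set whose boundary is a smooth complete hypersurface with a half-space blow-down is a half-space, e.g.\ because its nonlocal mean curvature vanishes and the classification of flat blow-downs propagates back). \textbf{The main obstacle} I anticipate is making the maximum-principle step rigorous in the nonlocal setting: one must work with the viscosity formulation of $H_\alpha[\C] = 0$ and carefully justify that the ordering $\C \supseteq \{x_{n+1}<0\}$ together with a contact point at the origin yields, via the representation~\eqref{Halphadef}, a strictly positive contribution to $H_\alpha$ unless the two sets agree up to measure zero — this is where the nonlocality actually helps (the integral sees the full half-space), but the argument needs the contact-point geometry of cones to be handled with care. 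An alternative route that sidesteps this is to note that $E$ is a subgraph after a rotation (since it contains a half-space, $\partial E$ is a graph in the $x_{n+1}$ direction over a large set, and in fact globally), apply Theorem~\ref{growthTeo} to the associated function $u$ satisfying $u \ge$ (a linear lower bound), concluding $u$ affine and hence $E$ a half-space; but Theorem~\ref{nononflatthm} is stated for general minimizers, so I would keep the direct blow-down proof as the primary argument.
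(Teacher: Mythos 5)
Your proposal is correct and follows essentially the same route as the paper: blow down $E$ to a nontrivial $\alpha$-minimal cone containing the half-space with a contact point at the origin, identify the cone with the half-space by the comparison/strong maximum principle for ordered $\alpha$-minimal sets (the paper handles this step by citing~\cite[Corollary~6.2]{CRS10}), and then transfer flatness back to $E$ via the $\varepsilon$-regularity theory (the paper's Lemma~\ref{blowdownlem}, which relies on~\cite[Lemma~3.1]{FV17}). The two points you flag as delicate—the rigorous viscosity maximum principle at the contact point and the ``flat blow-down implies flat set'' step—are precisely the items the paper outsources to these references, so no new idea is missing.
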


Theorem~\ref{nononflatthm} already appeared in the literature---see~\cite[Lemma~8.3]{DSV18}. For the reader's convenience, we nevertheless include a brief and slightly different proof of it in Section~\ref{nononflatsec}.

Interestingly, Theorem~\ref{nononflatthm} can be used to obtain a stronger version of Theorem~\ref{growthTeo}, where the bound in~\eqref{ugecone} is required to only hold at all points~$x'$ that lie in a half-space of~$\R^n$. See Remark~\ref{stressedrmk} at the end of Section~\ref{growthsec}.

The remainder of the paper is structured as follows. In Section~\ref{blowsec} we gather some known facts about sets with finite perimeter, the regularity of~$\alpha$-minimal surfaces, and their blow-downs. Section~\ref{splittingsec} is devoted to the proof of Theorem~\ref{singblowdownthm}, while in Section~\ref{mainsec} we show how Theorem~\ref{underPakmainthm} follows from it. Sections~\ref{nononflatsec} and~\ref{growthsec} contain the proofs of Theorems~\ref{nononflatthm} and~\ref{growthTeo}, respectively. The note is closed by Section~\ref{constapp}, which includes the extension of a result due to~Chern~\cite{C65} to the framework of graphs having constant~$\alpha$-mean curvature.

\section{Some remarks on nonlocal minimal surfaces and blow-down cones} \label{blowsec}

\noindent
As customary when dealing with the perimeter (either classical or fractional), we implicitly assume that all the sets we consider
contain their measure theoretic interior, do not intersect their measure theoretic exterior, and are such that their topological boundary coincides with their measure theoretic boundary---which is possible up to modifications in a set of Lebesgue measure zero.

More precisely, given a measurable set $E\subseteq\R^{n+1}$ we define
\begin{align*}
E_{\rm int} := & \hspace{3pt} \left\{ x \in \R^{n + 1} : |E \cap B_r(x)|=|B_1|r^{n+1} \mbox{ for some } r > 0 \right\},\\
E_{\rm ext} := & \hspace{3pt} \left\{x \in \R^{n + 1} : |E \cap B_r(x)|=0 \mbox{ for some } r > 0 \right\},
\end{align*}
and
\begin{align*}
\partial^-E := & \hspace{3pt} \R^{n+1}\setminus\big(E_{\rm int}\cup E_{\rm ext}\big) \\
= & \left\{ x \in \R^{n + 1} : 0<|E\cap B_r(x)|<|B_1|r^{n+1} \mbox{ for all } r > 0 \right\}.
\end{align*}
Then, we assume that
\[
E_{\rm int}\subseteq E,\quad E_{\rm ext}\cap E=\varnothing,\quad\mbox{and}\quad
\partial E=\partial^-E.
\]
See, e.g., step two in the proof of~\cite[Proposition~12.19]{Maggi} and Section~3.2 of~\cite{V91}. Notice that this requirement amounts to identifying the set~$E$ with a specific representative within its~$L^1_\loc$ class. Since
\[
\Per_\alpha(F,\Omega)=\Per_\alpha(E,\Omega) \quad \mbox{for every set } F \subseteq \R^{n + 1} \mbox{ such that } |E \Delta F| = 0,
\]
such an assumption does not affect the~$\alpha$-perimeter of~$E$.

We now recall some known results about the regularity of $\alpha$-minimal surfaces, which will be often used without mention in the subsequent sections.

Let~$E\subseteq\R^{n+1}$ be an~$\alpha$-minimal set. Then, its boundary~$\partial E$ is~$n$-rectifiable. Actually, by~\cite[Theorem~2.4]{CRS10},~\cite[Corollary~2]{SV13}, and~\cite[Theorem~1.1]{FV17},~$\partial E$ is locally of class~$C^\infty$, except possibly for a set of singular points~$\Sigma_E\subseteq\partial E$ satisfying
\[
\mathcal H^d(\Sigma_E)=0 \quad \mbox{for every }d>n-2.
\]
In particular, the set~$E$ has locally finite (classical) perimeter in~$\R^{n+1}$ and actually, as proved in~\cite{CSV18}, uniform perimeter estimates are available. Thus, it makes sense to consider its reduced boundary~$\red \! E$.

Furthermore, thanks to the blow-up analysis developed in~\cite{CRS10}---see in particular~\cite[Theorem~9.4]{CRS10}---and the tangential properties of the reduced boundary of a set of locally finite perimeter---see, e.g., \cite[Theorem~15.5]{Maggi}---we have that~$\red \! E$ is smooth and the singular set is given by
\[
\Sigma_E=\partial E\setminus\red \!E.
\]

Given a measurable set~$E \subseteq \R^{n + 1}$, a point~$x \in \R^{n + 1}$, and a real number~$r > 0$, we write
$$
E_{x, r} := \frac{E - x}{r}.
$$
We call any~$L^1_\loc$-limit~$E_{x, \infty}$ of~$E_{x, r_j}$ along a diverging sequence~$\{ r_j \}$ a \emph{blow-down limit} of~$E$ at~$x$.

Observe that doing a blow-down of a set~$E$ corresponds to the operation of looking at~$E$ from further and further away. As a result, in the limit one loses track of the point at which the blow-down was centered. That is, blow-down limits may depend on the chosen diverging sequence~$\{ r_j \}$ but not on the point of application~$x$. This fact is certainly well-known to the experts. Nevertheless, we include in the following Remark a brief justification of it for the convenience of the less experienced reader.

\begin{remark}\label{tang_cone_infty}
Let~$x, y \in \R^{n + 1}$ and~$E \subseteq \R^{n + 1}$ be a measurable set. Assume that there exists a set~$F \subseteq \R^{n + 1}$ such that~$E_{x, r_j} \rightarrow F$ in~$L^1_\loc(\R^{n + 1})$ as~$j \rightarrow +\infty$, along a diverging sequence~$\{ r_j \}$. We claim that also
\begin{equation} \label{E2conv}
E_{y, r_j} \rightarrow F \mbox{ in } L^1_\loc(\R^{n+1}) \mbox{ as } j \rightarrow +\infty.
\end{equation}
To verify this assertion, let~$R > 0$ be fixed and write~$f_j := \chi_{E_{x, r_j}}$ and~$f := \chi_{F}$. Notice that~$\chi_{E_{y, r_j}} = \tau_{v_j} f_j := f_j(\cdot - v_j)$, with~$v_j := (x - y) / r_j$. Since~$v_j \rightarrow 0$ as~$j \rightarrow 0$, we have
\begin{align*}
\left| (E_{y, r_j} \Delta F) \cap B_R \right| & = \| \chi_{E_{y, r_j}} - \chi_F \|_{L^1(B_R)} = \| \tau_{v_j} f_j - f \|_{L^1(B_R)} \\
& \le \| \tau_{v_j} f_j - \tau_{v_j} f \|_{L^1(B_R)} + \| \tau_{v_j} f - f \|_{L^1(B_R)}\\
& \le  \| f_j - f \|_{L^1(B_{R + 1})} + \| \tau_{v_j} f - f \|_{L^1(B_R)},
\end{align*}
provided~$j$ is sufficiently large. Claim~\eqref{E2conv} follows since, by assumption,~$f_j \rightarrow f$ in~$L^1_\loc(\R^{n + 1})$ and~$R > 0$ is arbitrary.
\end{remark}

In light of this remark, we can assume blow-downs to be always centered at the origin. For simplicity of notation, we will write~$E_r := E_{0, r} = E/r$ and use~$E_\infty$ to indicate any blow-down limit.

The next lemma collects some known facts about blow-downs of~$\alpha$-minimal sets.

\begin{lemma} \label{blowdownlem}
Let~$E \subseteq \R^{n + 1}$ be a nontrivial~$\alpha$-minimal set. Then, for every diverging sequence~$\{ r_j \}$, there exists a subsequence~$\{ r_{j_k} \}$ of~$\{ r_j \}$ and a set~$E_\infty \subseteq \R^{n + 1}$ such that~$E_{r_{j_k}} \rightarrow E_\infty$ in~$L^1_\loc(\R^{n + 1})$ as~$k \rightarrow +\infty$. The set~$E_\infty$ is a nontrivial~$\alpha$-minimal cone. Furthermore,~$E_\infty$ is a half-space if and only if~$E$ is a half-space.
\end{lemma}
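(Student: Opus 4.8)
The plan is to break the statement into four assertions and handle them in order: (i) precompactness of the rescaled family $\{E_r\}$ in $L^1_\loc$; (ii) the limit $E_\infty$ is $\alpha$-minimal; (iii) $E_\infty$ is a cone; (iv) $E_\infty$ is a half-space iff $E$ is. For (i), the key input is the uniform perimeter estimate for $\alpha$-minimal sets recalled above (from \cite{CSV18}): since $E$ is $\alpha$-minimal in all of $\R^{n+1}$, so is each $E_{r_j}$ (minimality of $\Per_\alpha$ is scale-invariant, because the rescaling $x \mapsto x/r$ multiplies $\Per_\alpha$ by a fixed power of $r$), and hence each $E_{r_j}$ satisfies $\Per(E_{r_j}, B_R) \le C(n) R^n$ with a constant independent of $j$. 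By the compactness theorem for sets of finite perimeter (e.g.\ \cite[Theorem~12.26]{Maggi}) applied on $B_R$ for $R = 1, 2, 3, \dots$ together with a diagonal argument, we extract a subsequence $\{r_{j_k}\}$ and a set $E_\infty$ with $E_{r_{j_k}} \to E_\infty$ in $L^1_\loc(\R^{n+1})$.

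For (ii), I would invoke the standard stability of $\alpha$-minimality under $L^1_\loc$ convergence: if $F_k$ is a sequence of sets that are $\alpha$-minimal in a bounded open set $\Omega$ and $F_k \to F$ in $L^1_\loc$, then $F$ is $\alpha$-minimal in $\Omega$. This is by now classical for the fractional perimeter (it follows from the lower semicontinuity of $\Per_\alpha$ under $L^1_\loc$ convergence together with a cut-and-paste comparison argument as in \cite{CRS10}, \cite{CSV18}); applying it on every ball $B_R$ gives that $E_\infty$ is $\alpha$-minimal in $\R^{n+1}$. Nontriviality of $E_\infty$ requires a density estimate: $\alpha$-minimal sets satisfy uniform density bounds at every boundary point (again \cite{CRS10}), i.e.\ $|E \cap B_r(x)| \ge c_0 r^{n+1}$ and $|B_r(x) \setminus E| \ge c_0 r^{n+1}$ for $x \in \partial E$, with $c_0$ scale-invariant; since $E$ is nontrivial its boundary is nonempty, and after translating so that $0 \in \partial E$ (which is legitimate by Remark \ref{tang_cone_infty}, since the blow-down limit does not depend on the center), these bounds pass to the limit and force $E_\infty$ and its complement to have positive measure.

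Assertion (iii) is the characteristic point. Fix $\lambda > 0$; I want to show $\lambda E_\infty = E_\infty$ up to null sets. The idea is that $\lambda E_\infty = \lim_k \lambda E_{r_{j_k}} = \lim_k E_{r_{j_k}/\lambda}$, so $\lambda E_\infty$ is itself a blow-down limit of $E$ along the diverging sequence $\{r_{j_k}/\lambda\}$. One then shows that this limit must coincide with $E_\infty$: the classical argument is a monotonicity-formula argument — the Almgren-type monotone quantity for $\alpha$-minimizers (developed in \cite{CRS10}, see in particular \cite[Theorem~9.4]{CRS10}) has a limit as the scale tends to infinity, and its constancy forces the blow-down to be dilation invariant; equivalently, one compares $\Per_\alpha(E_{r_{j_k}}, B_R)$ and $\Per_\alpha(E_{r_{j_k}/\lambda}, B_R)$ and uses that the difference is controlled by boundary terms that vanish in the limit after normalization. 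I expect this to be \emph{the main obstacle}: one has to be careful that the relevant monotone quantity is finite and has a limit — this uses precisely the uniform perimeter estimates — and that its limiting constancy really does upgrade to scale invariance of the set $E_\infty$, not merely of its perimeter density. Since this is a well-documented fact in the literature (it is exactly the analogue of the classical statement that tangent cones at infinity of area-minimizers are cones, cf.\ \cite{CRS10}), I would cite it rather than reprove it.

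Finally, for (iv): if $E$ is a half-space then every $E_{r_j}$ is a translate of that same half-space through scaling, so $E_\infty$ is a half-space. Conversely, suppose $E_\infty$ is a half-space. A half-space has smooth boundary and zero $\alpha$-mean curvature, and by the $\alpha$-minimal version of the improvement-of-flatness / Simons-type dimension reduction (or, more directly here, by the fact that a blow-down being a half-space means $E$ has vanishing density excess at infinity), one concludes $E$ is smooth and, being $\alpha$-minimal with a flat blow-down, must itself be a half-space; the cleanest route is to invoke that the density excess is monotone and is $0$ in the limit, hence $0$ at every scale, which by the $\varepsilon$-regularity theorem of \cite{CRS10} forces $\partial E$ to be a hyperplane. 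I would present (iv) by citing the density-excess monotonicity and the $\varepsilon$-regularity theorem, since spelling it out in full would duplicate \cite{CRS10}.
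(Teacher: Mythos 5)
Your proposal is correct and follows the same overall skeleton as the paper's proof: compactness of the rescalings, stability of $\alpha$-minimality under $L^1_\loc$ convergence (\cite{CRS10}), the cone property via the monotonicity formula as in~\cite[Theorem~9.2]{CRS10} (after recentering at a boundary point, which is legitimate by Remark~\ref{tang_cone_infty}), nontriviality via the uniform density estimates of~\cite[Theorem~4.1]{CRS10}, and the half-space rigidity via $\varepsilon$-regularity. Two of your technical inputs differ from the paper's, in an interchangeable way. For compactness you invoke the uniform \emph{classical} perimeter estimates of~\cite{CSV18} together with BV compactness; the paper instead uses only the elementary comparison $\Per_\alpha(E_r, B_R) \le \Per_\alpha(E_r \setminus B_R, B_R) \le C R^{n+1-\alpha}$ (taking $E_r\setminus B_R$ as competitor), which bounds $[\chi_{E_{r_j}}]_{W^{\alpha,1}(B_R)}$ uniformly and then applies the compact fractional Sobolev embedding~\cite[Theorem~7.1]{DPV12}; your route is fine but leans on a much deeper result than needed, while the paper's is self-contained at the level of the definition of $\Per_\alpha$. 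For the implication ``$E_\infty$ half-space $\Rightarrow$ $E$ half-space'' you sketch a density-excess monotonicity argument (excess nonnegative, monotone, vanishing at infinity, hence identically zero), whereas the paper deduces flatness from local Hausdorff convergence $\partial E_{r_{j_k}}\to\partial E_\infty$ (a consequence of the density estimates) combined with the $\varepsilon$-regularity theory of~\cite[Section~6]{CRS10}, citing~\cite[Lemma~3.1]{FV17}; your version works but rests on the additional standard facts that the extension density at a boundary point is at least the half-space value and that equality characterizes half-spaces, which you should either justify or cite precisely if you write it out.
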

\begin{proof}
The existence of a limit of~$E_{r_j}$ (up to a subsequence) is a consequence of uniform estimates for the~$\alpha$-perimeter of~$\alpha$-minimal sets and the compactness of the fractional Sobolev embedding. More in detail, by the scale invariance of~$\Per_\alpha$, we have that~$E_{r}$ is an~$\alpha$-minimal set. Hence, for every~$r, R > 0$,
$$
\Per_\alpha(E_{r}, B_R) \le \Per_\alpha(E_{r} \setminus B_R, B_R) \le \int_{B_R} \int_{\R^{n + 1} \setminus B_R} \frac{dx dy}{|x - y|^{n + 1 + \alpha}} \le C R^{n+1 - \alpha},
$$
for some constant~$C > 0$ depending only on~$n$ and~$\alpha$. In particular, for fixed~$R > 0$, the quantities~$[\chi_{E_{r_j}}]_{W^{1, \alpha}(B_R)}$ are bounded uniformly in~$j \in \N$. By, say,~\cite[Theorem~7.1]{DPV12}, there exist therefore a subsequence~$\{ r_{j_k}^{(R)} \}$ and a set~$E_\infty^{(R)} \subseteq B_R$ for which~$E_{r_{j_k}^{(R)}} \rightarrow E_\infty^{(R)}$ in~$L^1(B_R)$ as~$k\rightarrow +\infty$. A standard diagonal argument then yields the existence of a limit~$E_\infty \subseteq \R^{n + 1}$ in~$L^1_\loc(\R^{n + 1})$ along some subsequence~$\{ r_{j_k} \}$.

The fact that~$E_\infty$ is~$\alpha$-minimal is a consequence of the~$\alpha$-minimality of the sets~$E_{r_{j_k}}$
and their~$L^1_\loc$ convergence to~$E_\infty$---see~\cite[Theorem~3.3]{CRS10}.

Next we observe that, since~$E$ is nontrivial, we can find a point~$x\in\partial E$. Thanks to
Remark~\ref{tang_cone_infty}, we then have that
$$
E_{x,r_{j_k}}\to E_\infty\mbox{ in }L^1_\loc(\R^{n+1})\mbox{ as }k\to\infty.
$$
Since~$0\in\partial E_{x,r_{j_k}}$ for every~$k\in\mathbb N$,
we can conclude that~$E_\infty$ is a cone by arguing as in~\cite[Theorem~9.2]{CRS10}.

The nontriviality of~$E_\infty$ can be established, for instance, by using the uniform density estimates of~\cite{CRS10}. Indeed,~$0 \in \partial E_{x,r_{j_k}}$ for every~$k \in \N$ and hence~\cite[Theorem~4.1]{CRS10} gives that~$\min \{ |E_{x, r_{j_k}} \cap B_1|, |B_1 \setminus E_{x, r_{j_k}}| \} \ge c$ for some constant~$c > 0$ independent of~$k$. As~$E_{x,r_{j_k}} \rightarrow E_\infty$ in~$L^1(B_1)$, it follows that both~$E_\infty$ and its complement have positive measure in~$B_1$. Consequently,~$E_\infty$ is neither the empty set nor the whole~$\R^{n + 1}$.

Finally, if~$E_\infty$ is a half-space, one can deduce the flatness of~$\partial E$ from the~$\varepsilon$-regularity theory of~\cite[Section~6]{CRS10} and the fact that~$\partial E_{r_{j_k}} \rightarrow \partial E_\infty$ in the Hausdorff sense, thanks to the uniform density estimates. See, e.g.,~\cite[Lemma~3.1]{FV17} for more details on this argument.
\end{proof}

\section{Proof of Theorem~\ref{singblowdownthm}} \label{splittingsec}

\noindent
In this section we include a proof of the splitting result stated in the introduction, namely Theorem~\ref{singblowdownthm}. The argument leading to it is based on the following classification result for nonlocal minimal cones that contain their translates. For classical minimal cones, it was proved in~\cite{GMM97}.

\begin{proposition} \label{GMMprop}
Let~$\C \subseteq \R^{n + 1}$ be an~$\alpha$-minimal cone and assume that
\begin{equation} \label{C+vinC}
\C + v \subseteq \C
\end{equation}
for some~$v \in \R^{n + 1} \setminus \{ 0 \}$. Then,~$\C$ is either a half-space or a cylinder in direction~$v$.
\end{proposition}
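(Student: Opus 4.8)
The plan is to exploit the cone structure together with the monotonicity hypothesis \eqref{C+vinC} to produce a direction along which $\C$ is translation invariant, and then to show that this forces $\C$ to be either a cylinder or a half-space. First I would observe that, since $\C$ is a cone and $\C + v \subseteq \C$, one automatically gets $\C + tv \subseteq \C$ for every $t > 0$: indeed $\C + tv = t\big( (1/t)\C + v\big) = t(\C + v) \subseteq t\C = \C$, using that $\C$ is a cone. Thus the family $t \mapsto \C + tv$ is monotone nonincreasing (as sets, under reverse inclusion), and one can consider the two limiting sets
\[
\C_{+\infty} := \bigcap_{t > 0} (\C + tv) \qquad \text{and} \qquad \C_{-\infty} := \bigcup_{t > 0} (\C - tv) = \bigcup_{t < 0}(\C + tv),
\]
both of which are easily seen to be cones that are genuinely invariant under translation by $v$ (hence cylinders in direction $v$, possibly trivial). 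The natural strategy is then a blow-down/blow-up argument: translating $\C$ by $tv$ and rescaling is essentially the same as looking at $\C$ near a point on the axis $\mathbb{R}v$, so $\C_{-\infty}$ arises as a blow-down of $\C$ while $\C_{+\infty}$ arises as a blow-up at a point far along $-\mathbb{R}v$. By the results recalled in Section \ref{blowsec} (in particular the $\alpha$-minimality of blow-up and blow-down limits via \cite[Theorem~3.3]{CRS10}, the conical structure via \cite[Theorems~9.2 and~9.4]{CRS10}, and the density estimates of \cite{CRS10}), both $\C_{\pm\infty}$ are $\alpha$-minimal cones invariant under $v$.

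Next, I would dichotomize according to whether the translation is, in the limit, "rigid" or not. If $\C + v = \C$ already, then $\C$ is directly a cylinder in direction $v$ and we are done. Otherwise the inclusion is strict, and I would argue that the strict monotonicity, combined with the strong maximum principle for the nonlocal mean curvature operator (the sets $\partial\C$ and $\partial(\C+v)$ are both $\alpha$-minimal, one lies on one side of the other, and they touch only possibly at infinity or at the origin), propagates to show that the boundary $\partial\C$ contains a half-line (a full ray of the axis direction, or rather: the structure of $\partial \C$ is a graph in the $v$ direction over the "horizontal" hyperplane $v^\perp$, away from its singular set). Concretely: the strict inclusion $\C + v \subsetneq \C$ together with minimality and density estimates should give that, writing points as $x = x'' + sv$ with $x'' \in v^\perp$, the slices $\C \cap (x'' + \mathbb{R}v)$ are sub-rays $(-\infty, g(x''))$ for a function $g : v^\perp \to \mathbb{R} \cup \{\pm\infty\}$ — that is, $\C$ is a subgraph in direction $v$. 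This is where I would lean on Proposition~\ref{GMMprop}'s classical analogue in \cite{GMM97} for the right bookkeeping.

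Now $\C$ is the subgraph of a function $g$ on $v^\perp \cong \R^n$, $g$ is a solution of $\cu_\alpha g = 0$ there (by the equivalence between $\alpha$-minimality of subgraphs and \eqref{Hu=0}, as in \cite{CL18}), and $\C$ being a cone forces $g$ to be positively homogeneous of degree one. A homogeneous-degree-one solution of \eqref{Hu=0} is, after differentiating, such that each partial derivative $\partial_i g$ is homogeneous of degree zero; but more usefully, homogeneity of degree one means $g$ is determined by its values on the unit sphere and its subgraph is a cone — and a convexity/uniqueness argument (again using the strict convexity of the energy associated to $\cu_\alpha$, cf.\ \cite{CL18}) shows that the only degree-one homogeneous solutions are affine, i.e.\ linear. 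Hence $\partial\C$ is a hyperplane through the origin and $\C$ is a half-space. The main obstacle I anticipate is making the passage "strict inclusion $\Rightarrow$ $\C$ is a subgraph in direction $v$" fully rigorous in the nonlocal setting: one must handle the singular set $\Sigma_\C$ (which is only known to be small, $\mathcal{H}^d = 0$ for $d > n-2$) and ensure the nonlocal strong maximum principle applies across it, which is exactly the kind of delicate point that the classical argument of \cite{GMM97} sidesteps using full regularity of minimal cones in low dimensions. Replacing that with density estimates plus the structure of $\red\C$, and an approximation argument, should close the gap.
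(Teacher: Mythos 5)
Your overall skeleton (either the translation is rigid, giving a cylinder, or else one must show $\C$ is a graph in direction $v$ and then flat) is the right shape, but the two steps that carry all the weight are not actually proved, and you acknowledge as much. First, monotonicity $\C + tv \subseteq \C$ for all $t>0$ only tells you that every slice $\C \cap (x'' + \R v)$ is a super-ray, possibly empty or all of the line; it does not tell you that the ``graph function'' $g$ is finite, let alone Lipschitz, and your appeal to a ``strong maximum principle'' to rule out the mixed situation (boundary portions parallel to $v$ coexisting with genuinely graphical portions, with the singular set $\Sigma_\C$ in between) is exactly the missing ingredient, not a routine verification. Second, even granting that $\C$ is a global graph, your final claim that positively homogeneous degree-one solutions of $\cu_\alpha u = 0$ are linear ``by a convexity/uniqueness argument'' is unsubstantiated; that statement is essentially equivalent to the flatness of graphical $\alpha$-minimal cones, i.e.\ to the non-cylindrical half of the proposition itself, so as written the argument is circular at its crucial point. (The auxiliary sets $\C_{\pm\infty}$ and the blow-up/blow-down remarks play no role in the rest of your argument.)

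The paper closes precisely these gaps by working with the scalar function $w := -\nu_\C \cdot v$ on the reduced boundary, which is nonnegative by the cone property and \eqref{C+vinC}. By \cite[Theorem~1.3(i)]{CC17}, $w$ solves the Jacobi-type equation $\L w + c^2 w = 0$ on $\red\C$, hence is $\L$-superharmonic there, and the nonlocal Harnack inequality \cite[Corollary~6.9]{CC17} (together with the perimeter estimates of \cite{CSV18}) yields a clean global dichotomy with no connectivity or singular-set issues, since the operator integrates over all of $\red\C$: either $w \equiv 0$ on $\red\C$, which forces $\C$ to be a cylinder in direction $v$, or $w$ has a positive infimum on every ball intersected with $\red\C$. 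In the latter case $\partial\C$ is a locally Lipschitz graph in direction $v$ by Miranda's theorem \cite[Theorem~5.6]{M64}, hence smooth by \cite[Theorem~1.1]{FV17}, and a cone whose boundary is smooth at the vertex is a half-space; note that flatness comes from smoothness at the origin, not from a Liouville theorem for homogeneous solutions. If you want to salvage your outline, you would need to supply substitutes for exactly these two inputs: a quantitative strong maximum principle on $\red\C$ valid across the singular set, and a proof that Lipschitz (or merely graphical) $\alpha$-minimal cones are flat.
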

\begin{proof}
First of all, we notice that, since~$\C$ is a cone and inclusion~\eqref{C+vinC} holds true, the function~$w := - \nu_\C \cdot v$ satisfies
\begin{equation} \label{wge0}
w \ge 0 \quad \mbox{in } \red \C.
\end{equation}
To see this, let~$x \in \red \C$ and observe that,~$\C$ being a cone, we have that~$\mu x \in \overline{\C}$ for every~$\mu > 0$. But then~$\mu x + v \in \overline{\C} + v$ and, using~\eqref{C+vinC}, it follows that~$\mu x + v \in \overline{\C}$. Consequently,~$\mu \lambda x + \lambda v = \lambda (\mu x + v) \in \overline{\C}$ for every~$\lambda, \mu > 0$. Choosing~$\mu = 1/\lambda$ we get that~$x + \lambda v \in \overline{\C}$ for every~$\lambda > 0$, which gives that~$v$ points inside~$\overline{\C}$. Recalling that the normal~$\nu_\C$ points outside~$\C$, we are immediately led to~\eqref{wge0}.

Now, by~\cite[Theorem~1.3(i)]{CC17} we know that~$w$ solves
\begin{equation} \label{WJharm}
\L w + c^2 w = 0 \quad \mbox{in } \red \C,
\end{equation}
where
\begin{align*}
\L w(x) & := \PV \int_{\red \C} \frac{w(y) - w(x)}{|x - y|^{n + 1 + \alpha}} \, d\Haus^n(y),\\
c^2(x) & := \frac{1}{2} \int_{\red \C} \frac{|\nu_\C(x) - \nu_\C(y)|^2}{|x - y|^{n + 1 + \alpha}} \, d\Haus^n(y),
\end{align*}
for every~$x \in \red \C$. As~$c^2 \ge 0$ in~$\red C$ and~\eqref{wge0} holds true, we deduce from~\eqref{WJharm} that~$w$ is~$\L$-superharmonic in~$\red \C$, i.e.,
$$
- \L w \ge 0 \quad \mbox{in } \red \C.
$$
By~\cite[Corollary~6.9]{CC17} (and the perimeter estimate of~\cite{CSV18}), we then infer that, for every point~$x \in \red \C$ and radius~$R > 0$, the function~$w$ satisfies
$$
\inf_{B_R(x) \cap \red \C} w \ge c_\star R^{1 + \alpha} \int_{\red \C} \frac{w(y)}{(R + |y - x|)^{n + 1 + \alpha}} \, d\Haus^n(y),
$$
for some constant~$c_\star \in (0, 1]$ depending only on~$n$ and~$\alpha$.

Accordingly, either~$w = 0$ in the whole~$\red \C$ or~$\inf_{B_R(x) \cap \red \C} w \ge c_{x, R}$ for some constant~$c_{x, R} > 0$ and for every~$x \in \red \C$ and~$R > 0$. In the first case, it is easy to see that~$\C$ must be a cylinder in direction~$v$. If the second situation occurs, then~$\partial \C$ is a locally Lipschitz graph with respect to the direction~$v$ (see, e.g.,~\cite[Theorem~5.6]{M64}), and hence smooth, due to~\cite[Theorem~1.1]{FV17}. It being a cone, we conclude that~$\C$ must be a half-space.
\end{proof}

With this in hand, we may now proceed to prove the splitting result.

\begin{proof}[Proof of Theorem~\ref{singblowdownthm}]
Let~$E$ denote the subgraph of~$u$, as defined by~\eqref{Esubgraph}. We recall that, as observed right before the statement of Theorem~\ref{underPakmainthm}, the set~$E$ is~$\alpha$-minimal.

Let~$\C$ be a blow-down cone of~$E$. By definition, there exists a diverging sequence~${r_j}$ for which~$E_{r_j} = E/r_j \rightarrow \C$ in~$L^1_\loc(\R^{n + 1})$. As noticed in Lemma~\ref{blowdownlem},~$\C$ is a nontrivial~$\alpha$-minimal cone. Moreover,~$\C$ is not an half-space, since, otherwise,~$E$ would be a half-space too (again, by Lemma~\ref{blowdownlem}),
%and~$u$ would thus be affine, contradicting our hypotheses. Recall that this is equivalent to the cone~$\C$ being singular.
contradicting the hypothesis that~$E$ is the subgraph of a non-affine function. We also recall that this is equivalent to the cone~$\C$ being singular.

As~$E$ is a subgraph, it follows that~$E - t e_{n + 1} \subseteq E$ for every~$t > 0$. This yields that~$E_{r_j} - e_{n + 1} \subseteq E_{r_j}$ for every~$j$. Hence, by~$L^1_\loc(\R^{n + 1})$ convergence,~$\C - e_{n + 1} \subseteq \C$. Since~$\C$ is not a half-space, by Proposition~\ref{GMMprop} we conclude that~$\C$ is a cylinder in direction~$e_{n + 1}$, that is
\begin{equation} \label{Cen+1cyl}
\C + \lambda e_{n + 1} = \C \quad \mbox{for every } \lambda \in \R,
\end{equation}
or, equivalently,~$\C = \C' \times \R$, for some singular~$\alpha$-minimal cone~$\C' \subseteq \R^n$. Observe that the~$\alpha$-minimality of~$\C'$ is a consequence of~\cite[Theorem~10.1]{CRS10}. Also note that to obtain~\eqref{Cen+1cyl} we only took advantage of the fact that~$E$ is an~$\alpha$-minimal subgraph and not the hypotheses on the partial derivatives of~$u$.

Let now~$i = 1, \ldots, k$ be fixed. By the bound from below on the partial derivative~$\frac{\partial u}{\partial x_i}$ and the fundamental theorem of calculus, there exists a constant~$\kappa > 0$ such that
$$
u(z' + t e_i) - u(z') = \int_{0}^{t} \frac{\partial u(z' + \tau e_i)}{\partial x_i} \, d\tau \ge - \kappa t
$$
for every~$z' \in \R^n$ and~$t > 0$. Let now~$u_j$ be the function defining the blown-down set~$E_{r_j}$. Clearly,~$u_j(z') = u(r_j z') / r_j$ and hence
$$
u_j(y' + e_i) - u_j(y') = \frac{u(r_j y' + r_j e_i) - u(r_j y')}{r_j} \ge - \kappa
$$
for every~$y' \in \R^n$ and~$j \in \N$. This means that~$E_j - \kappa e_{n + 1} + e_i \subseteq E_j$ for every~$j \ge 1$. Passing to the limit and using~\eqref{Cen+1cyl}, we deduce that~$\C + e_i = \C - \kappa e_{n + 1} + e_i \subseteq \C$. Taking advantage once again of Proposition~\ref{GMMprop} and of the fact that~$\C$ is not a half-space, we infer that~$\C$ is a cylinder in direction~$e_i$ for every~$i = 1, \ldots, k$. The conclusion of Theorem~\ref{singblowdownthm} follows.
\end{proof}

\section{Proof of Theorem~\ref{underPakmainthm}} \label{mainsec}

\noindent
First of all, we may assume that the partial derivatives of~$u$ bounded on one side are the first~$n - \ell$. Also, up to flipping the variable~$x_i$, for some~$i \in \{ 1, \ldots, n - \ell \}$, we may suppose that those partial derivatives are all bounded from below. All in all, we have that
$$
\frac{\partial u}{\partial x_i} \ge - \kappa \quad \mbox{for every } i = 1, \ldots, n - \ell,
$$
for some constant~$\kappa \ge 0$.

If~$u$ were not affine, then, by applying Theorem~\ref{singblowdownthm} with~$k = n - \ell$, we would have that every blow-down cone~$\C$ of the set~$E$ defined by~\eqref{Esubgraph} is given by
$$
\C = \R^k \times P \times \R,
$$
for some singular~$\alpha$-minimal cone~$P \subseteq \R^{n - k} = \R^{\ell}$. As this contradicts assumption~\eqref{Paellprop}, we conclude that~$u$ must be affine.

\section{Proof of Theorem~\ref{nononflatthm}} \label{nononflatsec}

\noindent
Let~$\Pi$ be a half-space contained in~$E$. Without loss of generality, we may assume that~$\Pi = \{ x \in \R^n : x_{n + 1} < 0 \}$. Consider then a blow-down~$\C$ of~$E$, which is a nontrivial~$\alpha$-minimal cone, by Lemma~\ref{blowdownlem}. In particular, $\Pi \subseteq \C$ and~$0 \in \partial \Pi \cap \partial \C$. Using, e.g.,~\cite[Corollary~6.2]{CRS10}, we infer that~$\C = \Pi$ and therefore that~$E$ is half-space as well, thanks again to Lemma~\ref{blowdownlem}.

\section{Proof of Theorem~\ref{growthTeo}} \label{growthsec}

\noindent
Suppose by contradiction that the function~$u$ is not affine and denote with~$E$ its subgraph. Up to a translation of~$E$ in the vertical direction, hypothesis~\eqref{ugecone} yields that~$E$ contains the cone
$$
\mathscr{D} := \left\{ x \in \R^{n + 1} : x_{n + 1} < - C |x'| \right\}.
$$
Consider now a blow-down~$\C$ of~$E$. On the one hand, we clearly have that~$\mathscr{D} \subseteq \C$. On the other hand, by arguing as in the beginning of the proof of Theorem~\ref{singblowdownthm}, we have that~$\C$ must be a nontrivial vertical cylinder. More precisely,~$\C=\C'\times\R$, for some nontrivial singular~$\alpha$-minimal cone~$\C'\subseteq\R^n$. These two facts imply that~$\C' = \R^n$, contradicting its nontriviality. This concludes the proof.

\begin{remark} \label{stressedrmk}
By a refinement of this argument we can prove a stronger version of Theorem~\ref{growthTeo}, where hypothesis~\eqref{ugecone} is replaced by
\begin{equation} \label{ugecone+}
u(x') \ge - C(1 + |x'|) \quad \mbox{for every } x' \in \R^n \mbox{ such that } x_1 < 0.
\end{equation}
Indeed, arguing by contradiction as before, we see that any blow-down of the subgraph of~$u$ is a cylinder of the form~$\C' \times \R$. In light of~\eqref{ugecone+}, the cone~$\C'$ contains a half-space of~$\R^n$ and is thus flat, due to Theorem~\ref{nononflatthm}. This leads to a contradiction.
\end{remark}

%We denote
%\[
%\mathcal S:=\left\{(x',x_{n+1})\,:\,x_{n+1}<-c\big(1+|x'|\big)\right\},
%\]
%and we observe that the translated set $\mathcal S+c e_{n+1}$ is a cone centered at $0$. As a consequence,
%\[
%\mathcal S_{-c e_{n+1},r_j}=\frac{\mathcal S+c e_{n+1}}{r_j}=\mathcal S+c e_{n+1},
%\]
%for every sequnce $r_j\to\infty$, and hence $\mathcal S+c e_{n+1}$ is the unique blow-down of $\mathcal S$.
%
%Since $\mathcal S\subseteq E$, exploiting also Remark \ref{tang_cone_infty}, we obtain
%$\mathcal S+c e_{n+1}\subseteq\C'\times\R$. It is readily seen that this implies that $\C'=\R^n$, which contradicts the non triviality of $\C$,
%concluding the proof.

\section{Subgraphs of constant fractional mean curvature} \label{constapp}

\noindent
We pointed out in the introduction that if a function~$u:\R^n\to\R$ is regular enough in a neighborhood of a point~$x'\in\R^n$, then the quantity~$\cu_\alpha u(x')$ considered in~\eqref{Hcorsdef}-\eqref{Gdef} is well-defined.

In case~$u$ is merely a measurable function, we can still understand~$\cu_\alpha u$ as a linear form on the fractional Sobolev space~$W^{\alpha, 1}(\R^n)$, setting
\[
\langle\cu_\alpha u, v\rangle:=
\int_{\R^n}\int_{\R^n} G \! \left( \frac{u(x')-u(y')}{|x'-y'|} \right)
\! \left( v(x')-v(y') \right) \frac{dx' dy'}{|x'-y'|^{n+\alpha}}
\]
for every~$v\in W^{\alpha,1}(\R^n)$. This definition is indeed well-posed since~$G$ is bounded.

Let~$h$ be a real number. We say that a measurable function~$u:\R^n\to\R$ is a weak solution of~$\cu_\alpha u = h$ in~$\R^n$ if it holds
\begin{equation} \label{Halpha=hweak}
\langle\cu_\alpha u,v\rangle=h\int_{\R^n}v(x')\,dx' \quad \mbox{for every } v\in W^{\alpha,1}(\R^n).
\end{equation}
We remark that by the density of~$C^\infty_c(\R^n)$ in~$ W^{\alpha,1}(\R^n)$,
it is equivalent to consider the test functions~$v$ to be smooth and compactly supported.

We now prove that if the~$\alpha$-mean curvature of a global subgraph is constant, then this constant must be zero. More precisely, we have the following statement.

\begin{proposition} \label{c=0lem}
Let~$u:\R^n\to\R$ be a weak solution of~$\cu_\alpha u=h$ in~$\R^n$, for some constant~$h \in \R$. Then~$h=0$.
\end{proposition}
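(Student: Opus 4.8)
The plan is to test the weak formulation \eqref{Halpha=hweak} against suitably chosen functions and exploit the boundedness of $G$ together with a scaling argument to force $h = 0$. The key observation is that for any $v \in W^{\alpha,1}(\R^n)$ the left-hand side $\langle \cu_\alpha u, v\rangle$ is controlled by $\|G\|_{L^\infty} [v]_{W^{\alpha,1}(\R^n)}$, since
\[
|\langle \cu_\alpha u, v\rangle| \le \|G\|_{L^\infty(\R)} \int_{\R^n}\int_{\R^n} \frac{|v(x') - v(y')|}{|x'-y'|^{n+\alpha}}\,dx'dy' = \|G\|_{L^\infty(\R)}\,[v]_{W^{\alpha,1}(\R^n)},
\]
with $\|G\|_{L^\infty} = G(+\infty) < +\infty$ because the integrand defining $G$ in \eqref{Gdef} is integrable on $\R$ (as $n+1+\alpha > 1$). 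On the other hand, the right-hand side is $h \int_{\R^n} v$. So for every admissible $v$ we get $|h|\,\big|\int_{\R^n} v\big| \le \|G\|_{L^\infty}\,[v]_{W^{\alpha,1}(\R^n)}$.

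Next I would plug in a scaled family of test functions. Fix a nonnegative $\varphi \in C^\infty_c(\R^n)$ with $\int_{\R^n}\varphi > 0$, and for $R > 0$ set $v_R(x') := \varphi(x'/R)$. Then $\int_{\R^n} v_R = R^n \int_{\R^n}\varphi$, while a change of variables gives $[v_R]_{W^{\alpha,1}(\R^n)} = R^{n-\alpha}[\varphi]_{W^{\alpha,1}(\R^n)}$. Inserting these into the inequality above yields
\[
|h|\, R^n \int_{\R^n}\varphi \le \|G\|_{L^\infty(\R)}\, R^{n-\alpha}\, [\varphi]_{W^{\alpha,1}(\R^n)},
\]
i.e. $|h| \le C R^{-\alpha}$ for a constant $C$ independent of $R$. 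Letting $R \to +\infty$ forces $h = 0$.

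The only point requiring a little care — and the step I expect to be the main (though minor) obstacle — is checking the scaling identity for the Gagliardo seminorm and, more importantly, ensuring that $v_R$ is a legitimate test function, i.e. that $v_R \in W^{\alpha,1}(\R^n)$; this is immediate since $\varphi \in C^\infty_c \subseteq W^{\alpha,1}(\R^n)$ and the space is invariant under dilations (with the stated scaling of the seminorm). One should also confirm $\|G\|_{L^\infty} < \infty$, which is clear from \eqref{Gdef}. No compactness, regularity, or geometric input is needed here; the argument is purely a dimensional-analysis / test-function estimate, in contrast to the blow-down machinery used for the earlier theorems.
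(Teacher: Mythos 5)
Your argument is correct and is essentially the paper's own proof: both bound $|\langle\cu_\alpha u,v\rangle|$ by $\|G\|_{L^\infty}$ times the $W^{\alpha,1}$-seminorm of the test function and then exploit the mismatch between the $R^n$ growth of $\int v$ and the $R^{n-\alpha}$ growth of the seminorm, the only (immaterial) difference being that the paper tests with $\chi_{B'_R}$ while you use a dilated smooth bump $\varphi(\cdot/R)$.
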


\begin{proof}
Recalling~\eqref{Gdef}, we notice that
\[
|G(t)|\le \int_0^{+\infty}\frac{d\tau}{(1+\tau^2)^\frac{n+1+\alpha}{2}}=:\Lambda<+\infty \quad \mbox{for every } t\in\R.
\]
Suppose that~$h\ge0$---the case~$h\le0$ is analogous. Let~$R>0$ and consider the test function~$v=\chi_{B'_R}\in W^{\alpha,1}(\R^n)$. We have
\[
|\langle\cu_\alpha u,\chi_{B'_R}\rangle|\le2\Lambda\int_{B'_R}\int_{\R^n\setminus B'_R}\frac{dx' dy'}{|x'-y'|^{n+\alpha}}
= C R^{n-\alpha},
\]
for some constant~$C>0$ depending only on~$n$ an~$\alpha$. Since~$u$ weakly solves~$\cu_\alpha = h$ in~$\R^n$, by plugging~$v = \chi_{B_R'}$ in~\eqref{Halpha=hweak} we deduce that
\[
h |B'_1| R^n=h\int_{\R^n}\chi_{B'_R}(x')\,dx'=\langle\cu_\alpha u,\chi_{B'_R}\rangle\le C R^{n-\alpha}
\]
for all~$R>0$, that is~$0 \le h R^\alpha \le C / |B_1'|$. Letting~$R\to+\infty$ we conclude that~$h=0$.
\end{proof}

We point out that, as a consequence of Proposition~\ref{c=0lem} and the results of~\cite{CL18},
%~\cite[Corollary~1.12]{CL18},
if a function~$u\in W^{\alpha, 1}_\loc(\R^n)$ is a weak solution of~$\cu_\alpha u = h$ in~$\R^n$, then the subgraph of~$u$ must be an~$\alpha$-minimal set---thus extending to the nonlocal framework a celebrated result of Chern, namely the Corollary of Theorem~1 in~\cite{C65}.
 
We further remark that other definitions for solutions of the equation~$\cu_\alpha u=h$ could have been considered, namely smooth
pointwise solutions and viscosity solutions---for a rigorous definition see~\cite[Subsection~4.3]{Lthesis}.
%~\cite[Definition 1.6]{CL18}).
However, it is readily seen that a smooth pointwise solution is also a viscosity solution. Moreover, in~\cite{CL18} it will be shown that
%~\cite[Corollary 1.8]{CL18} shows that
a viscosity solution is also a weak solution. Consequently, Proposition~\ref{c=0lem} applies to these other two notions of solutions as well.

\vfill


\begin{thebibliography}{99$\,$}

\bibitem{ADM11}
L. Ambrosio, G. De Philippis, L. Martinazzi,
\emph{Gamma-convergence of nonlocal perimeter functionals},
Manuscripta Math. \textbf{134} (2011), no. 3-4, 377--403.

\bibitem{BDM69}
E. Bombieri, E. De Giorgi, M. Miranda,
\emph{Una maggiorazione a priori relativa alle ipersuperfici minimali non parametriche},
Arch. Ration. Mech. Anal. \textbf{32} (1969), 255--267.

\bibitem{BLV16}
C. Bucur, L. Lombardini, E. Valdinoci,
\emph{Complete stickiness of nonlocal minimal surfaces for small values of the fractional parameter},
to appear in Ann. Inst. H. Poincar\'e Anal. Non Lin\'eaire, DOI: 10.1016/j.anihpc.2018.08.003, arXiv:1612.08295.

\bibitem{BV16}
C. Bucur, E. Valdinoci,
\emph{Nonlocal diffusion and applications},
Lecture Notes of the Unione Matematica Italiana, Vol. 20, Springer, Heidelberg, 2016.

\bibitem{CCS17}
X. Cabr\'e, E. Cinti, J. Serra,
\emph{Stable $s$-minimal cones in $\R^3$ are flat for $s \sim 1$}
preprint, arXiv:1710.08722.

\bibitem{CC17}
X. Cabr\'e, M. Cozzi,
\emph{A gradient estimate for nonlocal minimal graphs},
to appear in Duke Math.~J., arXiv:1711.08232.

\bibitem{CRS10}
L. A. Caffarelli, J.-M. Roquejoffre, O. Savin,
\emph{Nonlocal minimal surfaces},
Comm. Pure Appl. Math. \textbf{63} (2010), no. 9, 1111--1144.

\bibitem{CV13}
L. A. Caffarelli, E. Valdinoci,
\emph{Regularity properties of nonlocal minimal surfaces via limiting arguments},
Adv. Math. \textbf{248} (2013), 843--871.

\bibitem{C65}
S.-S. Chern,
\emph{On the curvatures of a piece of hypersurface in Euclidean space},
Abh. Math. Sem. Univ. Hamburg \textbf{29} (1965), 77--91.

\bibitem{CSV18}
E. Cinti, J. Serra, E. Valdinoci,
\emph{Quantitative flatness results and $BV$-estimates for stable nonlocal minimal surfaces},
to appear in J. Differential Geom.

\bibitem{CF17}
M. Cozzi, A. Figalli,
\emph{Regularity theory for local and nonlocal minimal surfaces: an overview},
Nonlocal and nonlinear diffusions and interactions: new methods and directions, 117--158, Lecture Notes in Math., Vol. 2186, Fond. CIME/CIME Found. Subser., Springer, Cham, 2017.

\bibitem{CL18}
M. Cozzi, L. Lombardini,
\emph{On nonlocal minimal graphs},
in preparation.

\bibitem{DPV12}
E. Di Nezza, G. Palatucci, E. Valdinoci,
\emph{Hitchhiker's guide to the fractional {S}obolev spaces},
Bull. Sci. Math. \textbf{136} (2012), no. 5, 521--573.

\bibitem{DSV18}
S. Dipierro, J. Serra, E. Valdinoci,
\emph{Improvement of flatness for nonlocal phase transitions},
to appear in Amer. J. Math., arXiv:1611.10105.

\bibitem{DV18}
S. Dipierro, E. Valdinoci,
\emph{Nonlocal minimal surfaces: interior regularity, quantitative estimates and boundary stickiness},
Recent Developments in Nonlocal Theory, 165--209, Partial Differential Equations and Measure Theory, De Gruyter Open, Berlin, 2018.

\bibitem{F15}
A. Farina,
\emph{A Bernstein-type result for the minimal surface equation},
Ann. Sc. Norm. Super. Pisa Cl. Sci. (5) \textbf{14} (2015), no. 4, 1231--1237.

\bibitem{F17}
A. Farina,
\emph{A sharp Bernstein-type theorem for entire minimal graphs},
Calc. Var. Partial Differential Equations \textbf{57} (2018), no. 5, 5 pp.

\bibitem{FarV17}
A. Farina, E. Valdinoci,
\emph{Flatness results for nonlocal minimal cones and subgraphs},
to appear in Ann. Sc. Norm. Super. Pisa Cl. Sci. (5), DOI: 10.2422/2036-2145.201708\_019, arXiv:1706.05701.

\bibitem{FV17}
A. Figalli, E. Valdinoci,
\emph{Regularity and {B}ernstein-type results for nonlocal minimal surfaces},
J. Reine Angew. Math. \textbf{729} (2017), 263--273.

\bibitem{G84}
E. Giusti,
\emph{Minimal surfaces and functions of bounded variation},
Monographs in Mathematics, Vol. 80, Birkh\"auser Verlag, Basel, 1984.

\bibitem{GMM97}
E. Gonzalez, U. Massari, M. Miranda,
\emph{On minimal cones}, 
Appl. Anal. \textbf{65} (1997), no. 1-2, 135--143.

\bibitem{Lthesis}
L. Lombardini,
\emph{Minimization problems involving nonlocal functionals: nonlocal minimal surfaces and a free boundary problem},
PhD thesis, 2018, available at arXiv:1811.09746.

\bibitem{Maggi}
F. Maggi,
\emph{Sets of finite perimeter and geometric variational problems},
Cambridge Stud. Adv.
Math. \textbf{135}, Cambridge Univ. Press (2012), Cambridge.

\bibitem{M64}
M. Miranda,
\emph{Distribuzioni aventi derivate misure insiemi di perimetro localmente finito},
Ann. Scuola Norm. Sup. Pisa (3) \textbf{18} (1964), 27--56. 

\bibitem{M61}
J. Moser,
\emph{On {H}arnack's theorem for elliptic differential equations},
Comm. Pure Appl. Math. \textbf{14} (1961), 577--591.

\bibitem{SV13}
O. Savin, E. Valdinoci,
\emph{Regularity of nonlocal minimal cones in dimension 2},
Calc. Var. Partial Differential Equations \textbf{48} (2013), no. 1-2, 33--39.

\bibitem{V13}
E. Valdinoci,
\emph{A fractional framework for perimeters and phase transitions},
Milan J. Math. \textbf{81} (2013), no. 1, 1--23. 

\bibitem{V91} A. Visintin,
\emph{Generalized coarea formula and fractal sets},
Japan J. Indust. Appl. Math. \textbf{8} (1991), no. 2, 175--201.

\end{thebibliography}
\end{document}